\definecolor{Chocolat}{rgb}{0.36, 0.2, 0.09}
\definecolor{BleuTresFonce}{rgb}{0.215, 0.215, 0.36}
\DeclareMathAlphabet{\mathbbold}{U}{bbold}{m}{n}
\DeclareSymbolFont{rsfscript}{OMS}{rsfs}{m}{n}
\DeclareSymbolFontAlphabet{\mathrsfs}{rsfscript}
\DeclareFontFamily{OMS}{rsfs}{\skewchar\font'177}
\DeclareFontShape{OMS}{rsfs}{m}{n}{%
      <5> rsfs5
      <6> <7> rsfs7
      <8> <9> <10> rsfs10
      <10.95> <12> <14.4> <17.28> <20.74> <24.88> rsfs10
      }{}
\def\KK{\mathbb{K}}
\newcommand{\ac}{\scriptstyle \text{\rm !`}}
\DeclareMathOperator{\id}{id}
\DeclareMathOperator{\End}{End}
\DeclareMathOperator{\Hom}{Hom}
\DeclareMathOperator{\BV}{BV}
\theoremstyle{plain}
\newtheorem {theorem}[subsection]{Theorem}
\newtheorem {lemma}[subsection]{Lemma}
\newtheorem {corollary}[subsection]{Corollary}
\newtheorem {proposition}[subsection]{Proposition}
\newtheorem*{theoremintro}{Theorem}
\theoremstyle{definition}
\newtheorem {definition}[subsection]{Definition}
\newtheorem{remarks}[subsection]{Remarks}
\subjclass[2010]{Primary 58A12; Secondary 14F40, 53D17, 53D45, 18G55}
\keywords{De Rham cohomology, homotopy Frobenius manifold, Poisson/Jacobi manifold, multicomplex, Batalin--Vilkovisky algebra}
\thanks{S.S. was supported by the Netherlands Organisation for Scientific Research. B.V. was supported by the ANR HOGT grant.}
\begin{document}

\title[De Rham cohomology and Frobenius manifolds]{De Rham cohomology and homotopy Frobenius manifolds} 

\author{Vladimir Dotsenko}
\address{Mathematics Research Unit, 
	 University of Luxembourg, 
	 Campus Kirchberg, 
	 6, Rue Richard Couden\-hove-Kalergi, 
	 L-1359 Luxembourg, 
	 Grand Duchy of Luxembourg}
\email{vladimir.dotsenko@uni.lu}

\author{Sergey Shadrin}
\address{Korteweg-de Vries Institute for Mathematics, University of Amsterdam, P. O. Box 94248, 1090 GE Amsterdam, The Netherlands}
\email{s.shadrin@uva.nl}

\author{Bruno Vallette}
\address{Laboratoire J.A.Dieudonn\'e, Universit\'e de Nice Sophia-Antipolis, Parc Valrose, 06108 Nice Cedex 02, France}
\email{brunov@unice.fr}

\begin{abstract}
We endow the de Rham cohomology of any Poisson or Jacobi manifold with a natural homotopy Frobenius manifold structure. This result relies on a minimal model theorem for multicomplexes and a new kind of a Hodge degeneration condition.
\end{abstract}

\maketitle

\setcounter{tocdepth}{1}

\tableofcontents

\section*{Introduction}\label{sec:intro}

Jean-Louis Koszul defined in \cite{Koszul85}, for the first time, the general notion of a commutative algebra equipped with a square-zero differential operator of order $2$. This algebraic structure is 
now called a Batalin--Vilkovisky algebra. It is straightforward to extend this definition to the differential graded framework by requiring an extra compatible differential. One of the main example given by Koszul is the de Rham cochain complex of a Poisson manifold. 

A Frobenius manifold \cite{Manin99} is an algebraic structure that amounts to the operadic action of the homology of the Deligne--Mumford--Knudsen compactification of the moduli space of genus $0$ curves $H_\bullet(\overline{\mathcal{M}}_{0, n+1})$. 
Motivated by ideas from string theory~\cite{BCOV94}, Barannikov and Kontsevich showed in~\cite{BarannikovKontsevich98} that the Dolbeault cohomology of a Calabi--Yau manifold carries a natural Frobenius manifold structure; this demonstrated a crucial role Frobenius manifolds play in the formulation of one of the versions of the Mirror Symmetry conjecture~\cite{CaoZhou01}. 

Using the methods of Barannikov and Kontsevich together with a result of Mathieu \cite{Mathieu95}, Merkulov \cite{Merkulov98} endowed the de Rham cohomology of a symplectic manifold, satisfying the  hard Lefschetz condition, with a natural Frobenius manifold structure.

\smallskip

Getzler proved in \cite{Getzler95} that the Koszul dual of the operad $H_\bullet(\overline{\mathcal{M}}_{0, n+1})$ is the cohomology of the moduli space of genus $0$ curves $H^{\bullet+1}({\mathcal{M}}_{0, n+1})$. A coherent action of the latter spaces defines the notion of homotopy Frobenius manifold, with the required homotopy properties. Notice that a homotopy Frobenius manifold structure on a graded vector space, i.e. a chain complex with trivial differential, is made up of  an infinite sequence of strata of multilinear operations, whose first stratum forms a Frobenius manifold. 

The purpose of this paper is to prove the following theorem.

\begin{theoremintro}[\ref{PoissonFrob}, \ref{JacobiFrob}]
The de Rham cohomology of a Poisson manifold (respectively a  Jacobi manifold) carries a natural homotopy Frobenius manifold structure, which extends the product induced by the wedge product.
\end{theoremintro}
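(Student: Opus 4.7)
The plan is to build on the Barannikov--Kontsevich construction, which uses Hodge-type degeneration to extract a Frobenius manifold structure from the cohomology of a differential graded Batalin--Vilkovisky algebra. Following Koszul, the de Rham complex of a Poisson manifold $(M,\pi)$ is naturally such an algebra, with product $\wedge$, differential $d$, and BV-operator the Koszul--Brylinski operator $\Delta := [d,\iota_\pi]$. Rather than viewing this as merely a dGBV-algebra, I would package the anti-commuting pair $(d,\Delta)$ together with the wedge product as a commutative multicomplex, and aim to transport this package to the de Rham cohomology $H^\bullet_{dR}(M)$.

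First, I would prove a minimal model theorem for multicomplexes: every multicomplex over a characteristic-zero field admits an $\infty$-quasi-isomorphic minimal representative supported on the cohomology of its leading differential. The proof proceeds by homological perturbation, iteratively transferring the higher operators $d_1, d_2, \ldots$ using a chosen contracting homotopy for $d_0$. Applied to $(\Omega^\bullet(M),d,\Delta,\wedge)$, this yields a transferred tower of higher operations on $H^\bullet_{dR}(M)$ whose first term is induced by $\wedge$. Second, I would use Koszul duality between the hypercommutative operad $H_\bullet(\overline{\mathcal{M}}_{0,n+1})$ and the gravity operad $H^{\bullet+1}(\mathcal{M}_{0,n+1})$, together with Getzler's operadic identification, to recognise this transferred tower as precisely a homotopy Frobenius manifold structure. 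The new Hodge-type degeneration condition announced in the abstract enters here: it must be weaker than the $d\Delta$-degeneration used by Barannikov--Kontsevich in the Calabi--Yau setting, yet strong enough to guarantee that the transferred operations satisfy the hypercommutative-type relations on cohomology. I would then verify that this condition is automatic on the de Rham complex of any Poisson manifold, without assuming symplecticness or the hard Lefschetz property used by Merkulov. The Jacobi case follows by a parallel analysis once the correct multicomplex structure---involving an extra correction term beyond the Koszul--Brylinski operator, reflecting the Reeb vector field of the Jacobi pair---is identified.

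The main obstacle is carrying out the homotopy transfer compatibly with the commutative product in the multicomplex setting, since the standard operadic homotopy transfer theorem does not directly accommodate the infinite tower of operators present in a multicomplex; one must enlarge the operadic framework accordingly and prove the transfer at this new level of generality. A subtler difficulty is pinpointing the correct formulation of the new Hodge degeneration condition: strong enough to yield a genuine $\infty$-action of the Frobenius manifold operad, yet weak enough to hold universally for Poisson and Jacobi manifolds, which unlike Calabi--Yau or Kähler manifolds do not come equipped with a natural harmonic-theoretic splitting.
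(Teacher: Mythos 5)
Your proposal reproduces the overall architecture of the paper's argument: Koszul's dg $\BV$-algebra structure on $\Omega^\bullet(M)$, a minimal model / homotopy transfer theorem for multicomplexes, the Koszul duality between $H_\bullet(\overline{\mathcal{M}}_{0,n+1})$ and $H^{\bullet+1}(\mathcal{M}_{0,n+1})$, a Hodge-type degeneration condition, and a correction term involving the vector field $E$ in the Jacobi case. But it stops exactly at the point where the real content of the theorem lies. You write that you ``would then verify that this condition is automatic on the de Rham complex of any Poisson manifold,'' yet you give no mechanism for that verification, and you correctly flag pinpointing the condition as a ``subtler difficulty'' without resolving it. The paper's decisive idea is the \emph{gauge Hodge condition}: the Hodge-to-de Rham degeneration data exists if and only if there is a series $R(z)=\sum_{n\ge 1}R_nz^n$ with $e^{R(z)}\,d\,e^{-R(z)}=d+\Delta_1 z+\Delta_2 z^2+\cdots$ (so this criterion is an \emph{equivalent} reformulation, not a weakening of Barannikov--Kontsevich's condition as you suggest). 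For a Poisson manifold one takes $R(z)=i(\omega)z$ and the identity $[i(\omega),[i(\omega),d_{DR}]]=i([\omega,\omega])=0$ truncates the adjoint-action exponential after one step, giving $e^{i(\omega)z}d_{DR}e^{-i(\omega)z}=d_{DR}+\Delta z$; for a Jacobi manifold the same bracket equals $2i(E)i(\omega)$ and the next bracket vanishes, producing the three-term multicomplex $\Delta_0=d_{DR}$, $\Delta_1=\Delta$, $\Delta_2=i(E)i(\omega)$ and the exact conjugation $d_{DR}+\Delta z+i(E)i(\omega)z^2$. Without this (or some substitute) your claim that the degeneration holds for \emph{every} Poisson manifold is unsupported, and it is precisely what distinguishes the result from Merkulov's hard-Lefschetz-dependent one.

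A second, smaller issue: you conflate transferring the multicomplex operators with obtaining the homotopy Frobenius manifold structure. The minimal model theorem for multicomplexes transports only the tower $\{\Delta_n\}$; the ``transferred tower'' is not itself recognisable as a hypercommutative $\infty$-action. What actually happens in the paper is that the degeneration data (i.e., the \emph{vanishing} of all transferred $\Delta'_n$) is fed as a hypothesis into a separate, much stronger homotopy transfer theorem for dg $\BV$-algebras (respectively commutative $\BV_\infty$-algebras in the Jacobi case, where $\Delta^2\neq 0$ and one must check the order conditions on $\Delta_1,\Delta_2$), and it is that theorem which outputs the $\Omega H^{\bullet}(\mathcal{M}_{0,n+1})$-algebra structure on $H(A,d)$ extending the wedge product. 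So the ``main obstacle'' you identify --- transferring the product compatibly with the infinite tower of operators --- is not something you can dismiss as a technical enlargement of the operadic framework; it is handled by an external theorem whose hypothesis is exactly the degeneration you have not established.
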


This theorem extends Merkulov's result in three directions. First, it holds for any Poisson manifolds. Then, it provides us with higher geometrical invariants which faithfully encodes the initial algebraic structure since it allows us to reconstruct the homotopy type of the initial Batalin--Vilkovisky algebra. Finally, it also extends from Poisson manifolds to Jacobi manifolds.  Note that in the case of Jacobi manifolds, the setup of dg $\BV$-algebras is not sufficient anymore, and one has to use commutative homotopy $\BV$-algebras~\cite{Kravchenko00} instead. 

\smallskip

Furthermore, Cao and Zhou found a natural Frobenius manifold structure on the Dolbeault cohomology of a closed K\"ahler manifold~\cite{CaoZhou00}. They proved that, for a compact K\"ahler manifold, this Frobenius manifold structure is isomorphic to that on the de Rham cohomology~\cite{CaoZhouId99}. They also found a natural Frobenius manifold structure on equivariant cohomology of closed K\"ahler manifolds~\cite{CaoZhouEq99}. Finally, in \cite{CaoZhouQuant99}, they defined quantum de Rham cohomology of Poisson manifolds and its Laurent series version (the latter one closely related to the cyclic homology of Poisson manifolds~\cite{Pap00}). Then they constructed natural Frobenius manifold structures on the quantum de Rham cohomology of closed K\"ahler manifolds and Laurent quantum cohomology of compact symplectic manifolds. 

The method of the present paper can be applied \emph{mutatis mutandis} to obtain appropriate generalisations (with shorter proofs) of the abovementioned results of Cao and Zhou as well.

\smallskip

To prove our main result, we  develop further the homotopy theory of multicomplexes~\cite{Lapin01,Meyer78}. The notion of a multicomplex 
is a certain lift of the notion of a spectral sequence. We prove a minimal model theorem for multicomplexes, which amounts to a decomposition into a product of a minimal one and an acyclic trivial one. Furthermore, we introduce a new condition, called gauge Hodge condition, which ensures the uniform vanishing of the induced BV-operator (and its higher homotopies) on the underlying homotopy groups.  
This gauge Hodge condition, applied to the classical case of a bicomplex spectral sequence, gives a necessary and sufficient condition for that spectral sequence to degenerate at the first page.

The gauge condition is naturally suggested by the Givental action formalism we used to work with commutative homotopy $\BV$-algebras in~\cite{DotsenkoShadrinVallette11}. The idea of using gauge-type arguments to prove homotopical results is not completely new.
In particular, the operator $\Delta=Jd_{DR}J$ in complex geometry \cite{DGMS75} and generalised complex geometry \cite{Cavalcanti05,Cavalcanti06}, once written as $-Jd_{DR}J^{-1}$, can be viewed as gauge equivalent to $-d_{DR}$. Formulas ensuring the degeneration of appropriate spectral sequences for cyclic homology of Poisson manifolds \cite{Pap00} and quantum de Rham cohomology of Poisson manifolds \cite{Shur04} have a gauge symmetry flavour to them as well. Finally, the notion of gauge equivalence for Frobenius manifolds is studied in detail in Cao and Zhou~\cite{CaoZhou2003}, where it is used to prove that the construction of Barannikov and Kontsevich applied to two quasi-isomorphic dg $\BV$-algebras yields two Frobenius manifold structures that can be identified with one another.

\subsection*{Layout.} The paper is organised as follows. The first section deals with the  homotopy properties of mixed complexes and multicomplexes. We recall the homotopy transfer theorem for multicomplexes and we prove a minimal model theorem. In Section~\ref{sec:StrongHodge}, we introduce the gauge Hodge condition, and prove that its fulfilment is equivalent to the existence of a Hodge-to-de Rham degeneration data. In Section~\ref{sec:Poisson}, we construct a natural homotopy Frobenius manifold structure on the de Rham cohomology of any Poisson manifold. In Section~\ref{sec:Jacobi}, we do the same for basic de Rham cohomology of Jacobi manifolds, where the proof is very similar to the Poisson case, and for the whole de Rham cohomology of Jacobi manifolds, where the setup is more subtle, and commutative homotopy $\BV$-algebras enter the story.   

\subsection*{Conventions.} Thoughout the text, we work over a field $\KK$ of characteristic $0$.

\subsection*{Acknowledgements.} The second and the third author would like to thank the University of Luxembourg for the excellent working conditions enjoyed during their visits there.

\section{Homotopy theory of multicomplexes}\label{sec:MixedMulti}

\begin{definition}[Mixed complex and multicomplex]
A \emph{mixed complex} $(A, d, \Delta$) is a graded vector space $A$ equipped with two linear operators $d$ and $\Delta$ of respective degree $-1$ and $1$, satisfying
$$d^2=\Delta^2=d\Delta+\Delta d=0 \ .$$
A \emph{multicomplex} $(A, d=\Delta_0, \Delta_1, \Delta_2, \ldots )$ is a graded vector space $A$ endowed with a family of linear operators of respective degree $|\Delta_n|=2n-1$ satisfying 
$$\sum_{i=0}^n \Delta_i \Delta_{n-i} =0, \quad \text{for}\ \ n\ge 0\ . $$
\end{definition}

Since   $d=\Delta_0$ squares to zero, $(A, d)$ is a chain complex. We call \emph{homotopy groups} of  a multicomplex $A$, the underlying homology groups $H(A,d)$. A mixed complex is a multicomplex where all the higher operators $\Delta_n=0$ vanish, for $n\ge 2$. The notion of multicomplex is the notion of mixed complex \emph{up to homotopy} according to the Koszul duality theory, see \cite[Section~$10.3.17$]{LodayVallette12}.

\begin{definition}[$\infty$-morphism]
An \emph{$\infty$-morphism} $f\colon A \rightsquigarrow A'$ of multicomplexes is a family of linear maps $\{f_n\colon  A \to A' \}_{n \ge 0}$ of respective degree $|f_n|=2n$ satisfying 
$$\sum_{k+l=n}   f_k \Delta_l \, =  \sum_{k+l=n}   \Delta'_k f_l\, , \quad \text{for}\ \  n\ge 0\ . $$
The composite of two $\infty$-morphisms $f\colon A \rightsquigarrow A'$ and $g\colon A' \rightsquigarrow A''$ is given by 
$$(gf)_n := \sum_{k+l=n} g_k f_l\, , \quad \text{for}\ \  n\ge 0 \ . $$
The associated category is denoted by $\infty$-\textsf{multicomp}.
\end{definition}
Notice that $f_0\colon (A, d) \to (A', d')$ is a chain map. When the first map $f_0$ is a quasi-isomorphism (respectively an isomorphism), the $\infty$-morphism $f$ is called an \emph{$\infty$-quasi-isomorphism} (respectively an \emph{$\infty$-isomorphism}), and denoted $A \stackrel{\sim}{\rightsquigarrow} A'$ (respectively $A \stackrel{\cong}{\rightsquigarrow}\allowbreak A'$). The invertible morphisms of the category $\infty$-\textsf{multicomp} are the $\infty$-isomorphisms. An $\infty$-isomorphism whose first component is the identity map is called an \emph{$\infty$-isotopy} and denoted $A \stackrel{=}{\rightsquigarrow} A'$. \\

A \emph{homotopy retract} consists of the following data 
\begin{eqnarray*}
&\xymatrix{     *{ \quad \ \  \quad (A, d_A)\ } \ar@(dl,ul)[]^{h}\ \ar@<0.5ex>[r]^{p} & *{\
(H,d_H)\quad \ \  \ \quad }  \ar@<0.5ex>[l]^{i}\ , }&
\end{eqnarray*}
where $p$ is a  chain map, where $i$ is a quasi-isomorphism, and where $h$ has degree $1$, satisfying 
$$i p- \textrm{id}_A =d_A  h+ h  d_A \ . $$
 If moreover $pi=\textrm{id}_H$, then it is called a \emph{deformation retract}. 

\begin{proposition}[Homotopy Transfer Theorem \cite{Lapin01}]\label{prop:HTT}
Given a homotopy retract data between two chain complexes $A$ and $H$, and a multicomplex structure 
$\{ \Delta_n \}_{n \ge 1}$ on $A$, the following formulae define a multicomplex structure on $H$
\begin{equation}\label{eq:HTTformulae}
 \Delta'_n:=  \sum_{i_1+\dots+i_k=n }    
 p \Delta_{i_1} h \Delta_{i_2} h \ldots h \Delta_{i_k} i , \quad \text{for} \ \ n\ge 1
\ , 
\end{equation} 
an $\infty$-quasi-isomorphism $i_\infty\colon H \stackrel{\sim}{\rightsquigarrow}A$, which extends the map $i$
$$i_n:=  \sum_{i_1+\dots+i_k=n }    
 h \Delta_{i_1} h \Delta_{i_2} h \ldots h \Delta_{i_k} i , \quad \text{for} \ \ n\ge 1
\ , $$ 
 and an $\infty$-quasi-isomorphism $p_\infty\colon A \stackrel{\sim}{\rightsquigarrow} H $,  which extends the map $p$
$$p_n:=  \sum_{i_1+\dots+i_k=n }    
 p \Delta_{i_1} h \Delta_{i_2} h \ldots h \Delta_{i_k} h , \quad \text{for} \ \ n\ge 1
\ . $$
\end{proposition}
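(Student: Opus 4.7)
The plan is to reinterpret a multicomplex as a formal perturbation of a chain complex over $\KK[[u]]$ and then invoke the classical homological perturbation lemma. Introduce a formal variable $u$ of degree $-2$, so that $d_u := \sum_{n \geq 0} u^n \Delta_n$ is a well-defined degree $-1$ operator on $A[[u]]$. The multicomplex axioms $\sum_{i+j=n} \Delta_i \Delta_j = 0$ are equivalent to the single identity $d_u^2 = 0$; in other words, a multicomplex structure on $A$ is precisely a $\KK[[u]]$-linear, $u$-adically continuous, square-zero perturbation $d + t$ of $d = \Delta_0$ on $A[[u]]$, where $t := \sum_{n \geq 1} u^n \Delta_n \equiv 0 \pmod u$. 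Under this dictionary, $\infty$-morphisms of multicomplexes correspond to $\KK[[u]]$-linear chain maps between the associated perturbed complexes, since the identity $\sum_{k+l=n} f_k \Delta_l = \sum_{k+l = n} \Delta'_k f_l$ amounts to $f_u d_u = d'_u f_u$ for $f_u := \sum_{n \geq 0} u^n f_n$.

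The first step is to extend the homotopy retract $\KK[[u]]$-linearly to a retract between $(A[[u]], d)$ and $(H[[u]], d_H)$. Since the perturbation $t$ strictly raises the $u$-adic filtration, both $(1 - ht)^{-1} = \sum_{k \geq 0} (ht)^k$ and $(1 - th)^{-1} = \sum_{k \geq 0} (th)^k$ converge in the $u$-adic topology. The classical homological perturbation lemma then produces a transferred homotopy retract between $(A[[u]], d + t)$ and $(H[[u]], d_H + \bar t)$ with
\[ \bar t = p(1-th)^{-1} t i, \qquad i'_u = (1-ht)^{-1} i, \qquad p'_u = p(1-th)^{-1} \ . \]
The square-zero identity $(d_H + \bar t)^2 = 0$ and the chain-map identities for $i'_u$, $p'_u$ are automatic consequences of the perturbation lemma, and, by the dictionary above, they translate coefficient by coefficient in $u$ into the multicomplex relations on $H$ and the $\infty$-morphism relations for the sought $i_\infty$ and $p_\infty$.

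The remaining computational step is to extract the coefficient of $u^n$ in each of the three operators above. Expanding the geometric series with $t = \sum_{m \geq 1} u^m \Delta_m$ yields, for each $n \geq 1$,
\[ [u^n]\, p(1-th)^{-1} t i \ = \ \sum_{k \geq 1} \ \sum_{\substack{i_1 + \cdots + i_k = n \\ i_j \geq 1}} p \Delta_{i_1} h \Delta_{i_2} h \cdots h \Delta_{i_k} i \ , \]
which is precisely the stated $\Delta'_n$; the analogous expansions of $(1 - ht)^{-1} i$ and $p(1 - th)^{-1}$ produce the claimed formulas for $i_n$ (ending in $i$) and $p_n$ (ending in $h$). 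Finally, $i_0 = i$ has image in quasi-isomorphic chain complexes, so $i_\infty$ and, symmetrically, $p_\infty$ are $\infty$-quasi-isomorphisms.

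The main obstacle is conceptual rather than computational: one must set up the dictionary between multicomplexes and $u$-adic $\KK[[u]]$-linear chain complexes \emph{once and for all}, with matching conventions for $\infty$-morphisms and composites, so that the perturbation lemma applies verbatim and the coefficient extraction is the only remaining task. A purely hands-on alternative would be to verify the multicomplex and $\infty$-morphism axioms directly by induction on $n$, telescoping the partial sums via the homotopy identity $i p - \id_A = d_A h + h d_A$; this avoids the auxiliary variable $u$ but trades the conceptual transparency of the perturbation-theoretic packaging for heavier combinatorial bookkeeping.
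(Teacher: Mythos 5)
Your proof is correct, but it takes a genuinely different route from the paper's. The paper treats the proposition as ``a straightforward computation'' and then backs this up conceptually via Koszul duality: a mixed complex is a module over the dual numbers $D=T(\Delta)/(\Delta^2)$, a multicomplex is a module over $D_\infty=\Omega D^{\ac}$, and the formulas follow from the general Homotopy Transfer Theorem of Loday--Vallette. You instead encode the whole multicomplex as a single square-zero perturbation $d+t$ over $\KK[[u]]$ and run the homological perturbation lemma; your dictionary is consistent with the paper's conventions (with $|u|=-2$ the operator $u^n\Delta_n$ has degree $-1$ and $u^nf_n$ degree $0$, and composition of $\infty$-morphisms matches multiplication of the series), and the coefficient extraction from $p(1-th)^{-1}ti$, $(1-ht)^{-1}i$ and $p(1-th)^{-1}$ reproduces exactly the stated $\Delta'_n$, $i_n$ and $p_n$. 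Your packaging is more elementary and makes the link with filtered complexes and the spectral sequence of Proposition~\ref{prop:Degeneration} transparent; the paper's packaging buys uniformity, since the same Koszul-duality machinery is reused later for commutative $\BV_\infty$-algebras. One point to tighten: the perturbation lemma is classically stated for special deformation retracts satisfying $pi=\id_H$ and the side conditions $hi=0$, $ph=0$, $h^2=0$, none of which are assumed in the paper's notion of homotopy retract (only $ip-\id_A=d_Ah+hd_A$ with $p$, $i$ chain maps and $i$ a quasi-isomorphism). The conclusions you actually need --- that $d_H+\bar t$ squares to zero and that the perturbed $i$ and $p$ intertwine the perturbed differentials --- do survive in this weaker setting (for instance, setting $T=(1-th)^{-1}t$ one checks $dT+Td+T^2+TdhT+ThdT=0$ using only $ip-\id_A=d_Ah+hd_A$ and $(d+t)^2=0$), but you should either cite a side-condition-free version of the lemma or include that verification, since the classical statement does not apply verbatim.
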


\begin{proof} The proof is a straightforward computation. One can also prove it using the following interpretation. 
Let $D:=T(\Delta)/(\Delta^2)$ be the algebra of dual numbers generated by one element of degree $1$. So a $D$-module is a mixed complex. The Koszul dual coalgebra $D^{\ac}=T^c(\delta)$ is the free coalgebra on a degree $2$ element $\delta:=s\Delta$, where $s$ stands for the homological suspension. The cobar construction of $D^{\ac}$ is equal to 
 $$
D_\infty:=\Omega D^{\ac}=T(s^{-1} (\bigoplus_{n\ge 1} \KK \,  \delta^n)).
 $$ 
So a $D_\infty$-module is a multicomplex. Using this interpretation, the proposition is a direct consequence of the general Homotopy Transfer Theorem of \cite[Section~$10.3$]{LodayVallette12}.
\end{proof}

\begin{definition}[Hodge-to-de Rham degeneration]
Let  $(A, d, \Delta_1, \Delta_2, \ldots )$  be a multicomplex. A {\em  Hodge-to-de Rham degeneration data}  consists of a homotopy retract 
\begin{eqnarray*}
\xymatrix{     *{ \quad \ \  \quad (A, d)\ } \ar@(dl,ul)[]^{h}\ \ar@<0.5ex>[r]^-{p} & *{\
(H(A),0)\ ,\quad \ \  \ \quad }  \ar@<0.5ex>[l]^-{i} } & \text{satisfying} \\
 \displaystyle \sum_{i_1+\dots+i_k=n }    
 p \Delta_{i_1} h \Delta_{i_2} h \ldots h \Delta_{i_k} i=0 , & \text{for}\  n\ge 1. 
\end{eqnarray*}
\end{definition}

This data amounts to the vanishing of all the transferred operators  $\Delta'_n$ on the underlying homotopy groups of a multicomplex. \\

To any multicomplex $(A, \Delta_0, \Delta_1, \Delta_2, \ldots )$, one associates the following chain complex.  
Let $C_{p,q}:=\allowbreak A_{p-q}$ and $\partial_r:=\Delta_r : C_{p,q}\to C_{p-1+r,q-r}$. We consider the total complex $\widehat{\mathrm{Tot}}(C)_n:=\prod_{p+q=n} C_{p,q}$, equipped with the differential $\partial:=\sum_{r\ge 0} \partial_r$. (The degrees of the respective $\Delta_n$ ensures that $\partial$ has degree $-1$.)
 The row filtration $F_n$ defined by considering the 
$C_{\bullet, k}$, for $k \leq -n$, provides us with a decreasing filtration of the total complex and thus with a spectral sequence $E^r(A)$. 

\begin{proposition}[Degeneration at page $1$]\label{prop:Degeneration}
The spectral sequence $E^r(A)$ associated to a multicomplex $(A, d=\Delta_0, \Delta_1, \Delta_2, \ldots )$ degenerates at the first page if and only if there exists a Hodge-to-de Rham degeneration data.
\end{proposition}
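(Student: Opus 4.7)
The plan is to reduce both implications to the transferred multicomplex on $H(A)$ produced by the Homotopy Transfer Theorem (Proposition~\ref{prop:HTT}), using as a standard input the fact that an $\infty$-quasi-isomorphism of multicomplexes induces an isomorphism of associated spectral sequences from page $E^1$ onwards. This invariance follows by assembling the family $(f_n)_{n\geq 0}$ of an $\infty$-quasi-isomorphism into a single filtered chain map of total complexes whose associated graded equals $f_0$, a quasi-isomorphism.

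For the implication $(\Leftarrow)$, the transferred operators $\Delta'_n$ appearing in Proposition~\ref{prop:HTT} are literally the sums whose vanishing defines a Hodge-to-de Rham degeneration data. Hence such a data turns $H(A)$ into the trivial multicomplex $(H(A),0,0,\ldots)$, whose spectral sequence degenerates at the first page, and the $\infty$-quasi-isomorphism $H(A)\stackrel{\sim}{\rightsquigarrow} A$ produced by the HTT transports this degeneration to $E^r(A)$.

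For the implication $(\Rightarrow)$, I would pick \emph{any} homotopy retract of $(A,d)$ onto $(H(A),0)$ (such a retract exists since we work over a field), and apply the HTT to obtain a transferred multicomplex $(H(A),0,\Delta'_1,\Delta'_2,\ldots)$ together with an $\infty$-quasi-isomorphism to $A$. Invariance of spectral sequences shows that the spectral sequence of the transferred multicomplex also degenerates at $E^1$. The result then reduces to the following clean lemma, which is the main technical obstacle: a multicomplex $(B,0,\Delta_1,\Delta_2,\ldots)$ with vanishing zeroth operator whose spectral sequence degenerates at $E^1$ must satisfy $\Delta_n=0$ for every $n\geq 1$.

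I would prove this last lemma by induction on $n$. Since $d_0=0$, the page $E^1$ equals $B$ and a direct computation identifies $d_1$ with $\Delta_1$, so degeneration forces $\Delta_1=0$. Assuming $\Delta_1=\cdots=\Delta_{n-1}=0$, every element $x\in F_p$ satisfies $\partial x=\sum_{r\geq n}\Delta_r x\in F_{p+n}$, so the cycle and boundary conditions defining $E^1,\ldots,E^n$ become trivial and these pages all coincide with $B$ equipped with zero differentials; reading off the class of $\partial x$ modulo $F_{p+n+1}$ then yields $d_n=\Delta_n$, whence $\Delta_n=0$. This inductive identification $d_n=\Delta_n$ under the vanishing of the lower operators is the only fiddly step; everything else is formal.
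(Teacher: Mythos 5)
Your argument is correct, but it takes a genuinely different route from the paper's. The paper works directly on the spectral sequence of $A$ itself: it invokes the explicit zig-zag formulae of \cite{BottTu82} for the higher differentials $d^r$ of a filtered complex and observes that, under the inductive hypothesis that the lower differentials vanish, these formulae coincide verbatim with the transfer formulae for $\Delta'_r$ in Proposition~\ref{prop:HTT}; both implications then follow from the single identification $d^r=\Delta'_r$. You interpose two reductions instead: (i) an $\infty$-quasi-isomorphism induces an isomorphism of spectral sequences from $E^1$ onwards --- which is correct, since $f_n$ has degree $2n$ and hence maps $C_{p,q}$ to $C'_{p+n,q-n}$, so that $\sum_n f_n$ is a well-defined filtered map of the product total complexes whose associated graded is $f_0$; and (ii) the problem is thereby transported to the transferred multicomplex on $H(A)$, which is minimal, where the identification $d^n=\Delta'_n$ under vanishing of the lower operators is elementary: one has $Z^r_p=F_p$ and all relevant boundaries lie in $F_{p+1}$, so every page up to $E^n$ is just $H(A)$ and $d^n$ is read off as the class of $\partial x$ modulo $F_{p+n+1}$, with no homotopy $h$ entering. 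What your route buys is that the only computational step happens on a minimal multicomplex, so you never need the formulae for $d^r$ of a general filtered complex; what it costs is the extra (standard, but not free) invariance lemma (i), which the paper's direct identification makes unnecessary. Both arguments share the same inductive core and both are complete.
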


\begin{proof}
If the differentials $d^r$ vanish for $r\ge 1$, then $E^1=E^2=\ldots=H(A, d)$. In this case, the formulae \cite[Chapter~$\text{III}$]{BottTu82} for the $d^r$ are equal to the formulae defining  the transferred $\Delta'_r$. The other way round, one sees by induction from $r=1$ that $E^r=H(A,d)$ and that $d^r=\Delta'_r$. 
\end{proof}

In the case of a mixed complex, $C_{\bullet, \bullet}$ is a bicomplex. So the Hodge-to-de Rham condition 
is equivalent to degeneration of  the usual bicomplex spectral sequence at the first page. This is the case for the classical Hodge-to-de Rham spectral sequence of compact K\"ahler manifolds.\\

A multicomplex $(A, d=\Delta_0, \Delta_1, \Delta_2, \ldots )$ is called \emph{minimal} when $d=\Delta_0=0$. It is called \emph{acyclic} when the underlying chain complex $(A, d)$ is acyclic, and it is called \emph{trivial} when $\Delta_n=0$, for $n\ge 1$.

\begin{theorem}[Minimal model]\label{thm:MinimalModel}
In the category $\infty$\textsf{-multicomp}, any multicomplex $A$ is $\infty$-isomorphic to the product of a minimal multicomplex $H=H(A)$, given by the transferred structure, with an acyclic trivial multicomplex $K$.
\end{theorem}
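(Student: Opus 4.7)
The plan is to realize $K$ as an explicit acyclic complement of the minimal part of $A$ inside the underlying chain complex, and then to build the required $\infty$-isomorphism by grafting the $\infty$-quasi-isomorphism $i_\infty$ of Proposition~\ref{prop:HTT} with an auxiliary $\infty$-morphism from $K$ to $A$ constructed inductively via acyclicity. Working over a field of characteristic zero, we first choose a deformation retract
\[
\xymatrix{(A,d)\ar@(dl,ul)[]^{h}\ar@<0.5ex>[r]^-{p} & (H(A),0)\ar@<0.5ex>[l]^-{i}}
\]
satisfying the standard side conditions $pi=\id_{H(A)}$, $ph=0$, $hi=0$, and $h^{2}=0$. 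Setting $K:=\ker p$ one has $A=i(H(A))\oplus K$ as graded vector spaces, the differential $d$ preserves each summand, and the homotopy identity $dh+hd=ip-\id_A$ restricts to $dh+hd=-\id_K$ on $K$; in particular $(K,d|_K)$ is acyclic. We equip $H:=H(A)$ with the minimal multicomplex structure transferred from $A$ by Proposition~\ref{prop:HTT}, equip $K$ with the trivial multicomplex structure $(K,d|_K,0,0,\dots)$, and form the product multicomplex $B:=H\times K$.

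I will construct the required $\infty$-isomorphism $\phi\colon B\stackrel{\cong}{\rightsquigarrow}A$ with components
\[
\phi_n(x,k):=(i_\infty)_n(x)+\sigma_n(k),
\]
where $i_\infty$ is the $\infty$-quasi-isomorphism extending $i$ furnished by Proposition~\ref{prop:HTT} and the maps $\sigma_n\colon K\to A$ are to be produced inductively, starting from $\sigma_0:=\iota_K$. The first component $\phi_0=i\oplus\id_K$ is a linear isomorphism, so once the $\infty$-morphism relations hold $\phi$ is automatically an $\infty$-isomorphism. Direct substitution, using the $\infty$-morphism identities of $i_\infty$ together with the minimality of $H$ and triviality of $K$, shows that the whole system of $\infty$-morphism equations for $\phi$ collapses to the single requirement that $(\sigma_n)_{n\geq 0}$ itself assemble into an $\infty$-morphism from $K$ to $A$ with $\sigma_0=\iota_K$, that is, for every $n\geq 1$,
\[
\sigma_n d|_K - d\,\sigma_n \;=\; R_n,\qquad R_n:=\sum_{k=1}^{n}\Delta^A_k\,\sigma_{n-k}.
\]

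The main technical step is the inductive solution of this cohomological equation. I will first verify the consistency identity $d\,R_n+R_n\,d|_K=0$, which comes from combining the multicomplex relations $\sum_{i=0}^{n}\Delta_i\Delta_{n-i}=0$ with the inductive hypothesis on $\sigma_{<n}$. The side condition $ph=0$ ensures $h(A)\subset K$, so $h$ restricts to a contracting homotopy of $(K,d|_K)$, and I use it to write the explicit primitive
\[
\sigma_n \;:=\; h\,R_n \;-\; i\,p\,R_n\,h.
\]
The first summand is the naive contracting-homotopy correction of the cocycle $R_n$; the second cancels the residual $i(H)$-component of the error and is legitimate thanks to the identity $p\,R_n\,d|_K=-p\,d\,R_n=0$, which holds because $d_H=0$. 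A direct computation using $dh+hd=ip-\id_A$ and $di=0$ then checks that $\sigma_n d|_K-d\sigma_n=R_n$, closing the induction. I expect the only genuine obstacle of the proof to be concentrated in this inductive step, whose difficulty is essentially sign-bookkeeping: the conceptual input is that the acyclicity of $K$ combined with the multicomplex relations forces the order-$n$ obstruction to extending $\sigma$ to vanish at every stage.
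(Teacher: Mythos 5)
Your proof is correct, and it follows the same overall strategy as the paper: split $A=H\oplus K$ into a choice of representatives of homology plus an acyclic complement, put the transferred minimal structure on $H$ and the trivial structure on $K$, and exhibit an explicit $\infty$-isomorphism between $A$ and the product whose $0$-th component is the identification $A\cong H\oplus K$. The one genuine difference is in how the comparison map is produced. The paper constructs $r\colon A\rightsquigarrow H\oplus K$ by extending the two projections, taking $p_\infty$ from the Homotopy Transfer Theorem on the $H$-factor and the closed-form components $q_n=qh\Delta_n$ on the $K$-factor, and appeals to the categorical property of the product (and to the general minimal model theorem of Loday--Vallette). You instead go in the opposite direction, $\phi\colon H\oplus K\rightsquigarrow A$, using $i_\infty$ on the $H$-factor and solving the $K$-factor equations $\sigma_n d|_K-d\sigma_n=R_n$ by induction, with the primitive $\sigma_n=hR_n-ipR_nh$; the consistency check $dR_n+R_nd|_K=0$ and the verification via $dh+hd=ip-\id_A$, $pd=0$, $di=0$ all go through as you indicate. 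Since $\infty$-isomorphisms are invertible, either direction proves the theorem; your version is more self-contained (it does not invoke the external reference or the universal property of the product), at the cost of an inductive rather than closed-form description of the $K$-component.
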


\begin{proof}
This theorem is a direct consequence of \cite[Theorem~$10.4.5$]{LodayVallette12} applied to the Koszul algebra $D$.
More precisely, we consider a choice of representatives for the homology classes $H(A)\cong H \subset A$ and a complement $K \subset A$ of it. This decomposes the chain complex $A = H\oplus K$, where the differential on $H=H(A)$ is trivial and where the chain complex $K$ is acyclic.  Let us denote the respective projections by $p\colon A \twoheadrightarrow H$ and by $q\colon A \twoheadrightarrow K$. This induces the following homotopy retract 
\begin{eqnarray*}
&\xymatrix{     *{ \quad \ \  \quad (A, d_A)\ } \ar@(dl,ul)[]^{h}\ \ar@<0.5ex>[r]^{p} & *{\
(H,0) \ .\quad \ \  \ \quad }  \ar@<0.5ex>[l]^{i}\ , }&
\end{eqnarray*}
Using Formula~(\ref{eq:HTTformulae}) of Proposition~\ref{prop:HTT}, we endow $H$ with the transferred multicomplex structure.  So $(H, 0 , \{\Delta'_n\}_{n\ge 1})$ is a minimal multicomplex and $(K, d_K, 0)$ is an acyclic trivial multicomplex. Their product in the category $\infty$\textsf{-multicomp} is given by 
$(H\oplus K, d_K , \allowbreak \{\Delta'_n\}_{n\ge 1})$. The projection $q$ extends to an $\infty$-morphism $q_\infty$ by 
$q_n := q h \Delta_n  $, for $n\ge 1$. By the categorical property of the product, the maps $p_\infty$ and $q_\infty$ induce the following $\infty$-isomorphism $r\colon A {\rightsquigarrow} H \oplus K  $, explicitly given by $r_0:=p+q$ and by 
\begin{eqnarray}  \label{eqn:InftyMorphiR}
r_n:=p_n+q_n= \sum_{i_1+\dots+i_k=n } p \Delta_{i_1} h \Delta_{i_2} h \ldots h \Delta_{i_k} h + qh\Delta_n , \quad \text{for} \ \ n\ge 1 \ . 
\end{eqnarray}
\end{proof}

\section{Gauge Hodge condition}\label{sec:StrongHodge}
We consider the algebra $\End(A)[[z]]:=\Hom(A,A)\otimes \KK[[z]]$
of formal power series with coefficients in the endomorphism algebra of $A$. One can view the $\infty$-endomorphisms of a mulitcomplex $A$ as elements of $\End(A)[[z]]$. Under this interpretation, their composite corresponds to the product of the associated series. 

\begin{theorem}\label{thm:main}
A multicomplex  $(A, d, \Delta_1, \Delta_2, \ldots )$  admits a Hodge-to-de Rham degeneration data if and only if there exists an
element $R(z):=\sum_{n\ge1}R_n z^n$ in $\End(A)[[z]]$  satisfying 
\begin{eqnarray}\label{eqn:Conj}
e^{R(z)} \, d\,  e^{-R(z)}= d +  \Delta_1  z + \Delta_2 z^2+\cdots \ .
\end{eqnarray}
\end{theorem}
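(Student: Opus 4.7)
The plan is to interpret Equation~(\ref{eqn:Conj}) as the statement that $e^{-R(z)}$ is an $\infty$-isotopy between the multicomplex $(A,d,\Delta_\bullet)$ and the trivial multicomplex $(A,d,0,0,\dots)$. To see this, package the multicomplex structure into the total differential $D(z):=d+\sum_{n\ge 1}\Delta_n z^n\in\End(A)[[z]]$ (which satisfies $D(z)^2=0$), and observe that an $\infty$-morphism $f\colon A\rightsquigarrow A'$ corresponds to a series $F(z)=\sum_{n\ge 0}f_n z^n$ satisfying $F(z)D_A(z)=D_{A'}(z)F(z)$. Equation~(\ref{eqn:Conj}) is then equivalent to $e^{-R(z)}D(z)=d\,e^{-R(z)}$, which is precisely the $\infty$-morphism equation for $e^{-R(z)}$ with first component $\id_A$.

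For the direction ``HDR data $\Rightarrow$ existence of $R$'', I would apply the Minimal Model Theorem~\ref{thm:MinimalModel} to $(A,d,\Delta_\bullet)$, taking the underlying homotopy retract of the Hodge-to-de Rham degeneration data. Since the transferred operators on $H(A)$ all vanish by hypothesis, the $\infty$-isomorphism $r$ produced by Theorem~\ref{thm:MinimalModel} lands in $(H(A),0,0,\dots)\oplus(K,d_K,0,\dots)=(A,d,0,0,\dots)$, and Formula~(\ref{eqn:InftyMorphiR}) shows that $r_0=p+q=\id_A$. The series $r(z)=\id_A+\sum_{n\ge 1}r_n z^n$ thus admits a well-defined formal logarithm, and setting $R(z):=-\log r(z)\in z\,\End(A)[[z]]$ gives $e^{-R(z)}=r(z)$, whence $e^{R(z)}\,d\,e^{-R(z)}=r(z)^{-1}d\,r(z)=D(z)$.

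For the converse, given $R(z)$, the series $e^{R(z)}$ defines an $\infty$-isotopy from $(A,d,0,0,\dots)$ to $(A,d,\Delta_\bullet)$. Choose any deformation retract $(h,p,i)$ of $(A,d)$ onto $(H(A),0)$, let $\{\Delta'_n\}_{n\ge 1}$ denote the associated transferred structure on $H(A)$, and let $p_\infty\colon(A,d,\Delta_\bullet)\rightsquigarrow(H(A),0,\{\Delta'_n\})$ be the corresponding $\infty$-quasi-isomorphism from Proposition~\ref{prop:HTT}. The inclusion $i$ is an honest $\infty$-morphism $(H(A),0,0,\dots)\rightsquigarrow(A,d,0,0,\dots)$ because $di=0$, and the composite $\phi:=p_\infty\circ e^{R(z)}\circ i$ is then an $\infty$-morphism $(H(A),0,0,\dots)\rightsquigarrow(H(A),0,\{\Delta'_n\})$ whose first component is $\phi_0=p\,i=\id_{H(A)}$. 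Its defining equation reads $\bigl(\sum_{n\ge 1}\Delta'_n z^n\bigr)\phi(z)=0$ in $\End(H(A))[[z]]$, from which a straightforward induction on $n$ combined with the invertibility of $\phi_0$ forces $\Delta'_n=0$ for every $n\ge 1$. The chosen retract therefore constitutes a Hodge-to-de Rham degeneration data.

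The main delicate step is the construction of the comparison $\infty$-morphism $\phi$ in the converse direction: one must combine Proposition~\ref{prop:HTT} with the $\infty$-isotopy $e^{R(z)}$ so that the resulting composite lives entirely within the minimal multicomplexes on $H(A)$, where the induction on the order of $z$ becomes transparent. The formal algebra of $\exp$ and $\log$ in $z\,\End(A)[[z]]$ is then routine, owing to the absence of a constant term in $R(z)$.
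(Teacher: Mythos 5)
Your proposal is correct and takes essentially the same route as the paper: both arguments read the conjugation identity as saying that $e^{\pm R(z)}$ is an $\infty$-isotopy between $(A,d,0,\dots)$ and $(A,d,\Delta_1,\Delta_2,\dots)$, transport it through the minimal-model $\infty$-isomorphism $r$ of Theorem~\ref{thm:MinimalModel} and the transferred structure of Proposition~\ref{prop:HTT}, and conclude by the inductive vanishing of the $\Delta'_n$ from the $\infty$-morphism equation on $(H(A),0)$. Your construction $R(z)=-\log r(z)$ is just the paper's Step~2 composite specialised to $\psi=\id_{H(A)}$, so the two proofs coincide up to presentation.
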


\begin{proof} The proof is built from  the following three  equivalences. 

\begin{itemize}

\item[\underline{Step 1.}] We first prove that there exists a series $R(z):=\sum_{n\ge1}R_n z^n \in \End(A)[[z]]$ satisfying 
Condition~(\ref{eqn:Conj}) if and only if there exists an $\infty$-isotopy 
$$(A, d, 0, \ldots) \stackrel{=}{\rightsquigarrow} \allowbreak (A, d, \Delta_1, \Delta_2, \ldots )$$ between $A$ with trivial structure and $A$ with its multicomplex structure. 

\noindent Condition~(\ref{eqn:Conj}) is equivalent to 
$e^{R(z)} \, d= (d +  \Delta_1  z + \Delta_2 z^2+\cdots ) e^{R(z)}$, which means that $e^{R(z)}$ is the required $\infty$-isotopy. 

\item[\underline{Step 2.}] Given a deformation retract for $A$ onto its underlying homotopy groups $H(A)$, there exists an $\infty$-isotopy $(A, d, 0, \ldots) \stackrel{=}{\rightsquigarrow} (A, d, \Delta_1, \Delta_2, \ldots )$  if and only if 
there exists an $\infty$-isotopy 
 $$
(H(A), 0, 0, \ldots)\allowbreak \stackrel{=}{\rightsquigarrow} (H(A), 0, \Delta'_1, \Delta'_2, \ldots )\ .
 $$ 

\noindent
The homotopy transfer theorem of Proposition~\ref{prop:HTT} provides us with the following diagram in $\infty$\textsf{-multicomp}.
$$\xymatrix@R=35pt@C=35pt{(A, d, 0, \ldots)  \ar@{~>}[r]^(0.45)=_(0.45)\varphi  &      (A, d, \Delta_1, \Delta_2, \ldots )    \ar@{~>}[d]^\sim_{p_\infty} \\
(H(A), 0, 0, \ldots)   \ar@{~>}[r]^(0.45)=_(0.45)\psi  \ar@{~>}[u]^i_\sim    &    (H(A), 0, \Delta'_1, \Delta'_2, \ldots )  
} $$
So given an $\infty$-isotopy $\varphi$, the composite $\psi:=p_\infty \, \varphi \, i $ is an $\infty$-isotopy. In the other way round, we suppose given an $\infty$-isotopy $\psi$. The map $p+q\colon A \to H(A)\oplus K$ is a map of chain complexes, hence it is an $\infty$-isomorphism between these two trivial multicomplexes. Then, the map $\psi + \id_K$ defined by $\id_H + \id_K$, for $n=0$, and by $\psi_n$, for $n\ge 1$ defines an $\infty$-isomorphism between $H(A)\oplus K$ with trivial multicomplex structure to $H(A)\oplus K$ with the transferred structure. Finally, we consider the inverse $\infty$-isomorphism $r^{-1}\colon H(A)\oplus K \stackrel{\cong}{\rightsquigarrow} A$ of the $\infty$-isomorphism $r$ given at (\ref{eqn:InftyMorphiR}) in the proof of Theorem~\ref{thm:MinimalModel}. The composite $r^{-1}\, (\psi + \id_K)\, (p+q) $ of these three maps provides us with the required $\infty$-isotopy.  

\item[\underline{Step 3.}] Let us now prove that an $\infty$-isotopy 
 $$
(H(A), 0, 0, \ldots)\allowbreak \stackrel{=}{\rightsquigarrow} (H(A), 0, \Delta'_1,\allowbreak \Delta'_2, \ldots )
 $$ 
exists if and only if the (transferred) operators $\Delta'_n=0$ vanish for $n\ge 1$. 

\noindent
Let us denote by $f\colon (H(A), 0, 0, \ldots)  \stackrel{=}{\rightsquigarrow} (H(A), 0, \Delta'_1,\Delta'_2,  \ldots)$ the given $\infty$-isotopy.  The defining condition 
$$\sum_{k+l=n}   \Delta'_k f_l =\sum_{k+l=n}   f_k \Delta_l  =0\, , \quad \text{for}\ \  n\ge 1$$
implies $\Delta'_n=0$, for $n\ge 1$, by a direct induction. In the other way, the identity $\id_{H(A)}$ provides us the required $\infty$-isotopy. 
\end{itemize}

\end{proof}

We call the \emph{gauge Hodge condition} the existence of a series $R(z)\in \End(A)[[z]]$ satisfying the conjugation condition~(\ref{eqn:Conj}).

\begin{remarks}$ \ $
\begin{itemize}
\item[$\diamond$] 
Notice that this proof actually shows that, under the gauge Hodge condition,  \emph{every}  deformation retract is a Hodge-to-de Rham degeneration data. In this case, the transferred multicomplex structure vanishes uniformly, i.e. independently of the choices of representatives of the homotopy groups. This theorem solves a question that we raised at the end of \cite{DotsenkoShadrinVallette11}.

\item[$\diamond$] When $(A,d,\Delta)$ is a mixed complex equipped with a Hodge-to-de Rham degeneration data, the seriez $R(z)$ defined by the formula 
 $$
R(z):= -\log(1 - h\Delta  z + \sum_{n\ge 1 }  i p (\Delta h)^n z^n ) \ ,    
 $$
satisfies Relation~(\ref{eqn:Conj}) of Theorem~\ref{thm:main}. Explicitly, $R(z)=\sum_{n\ge 1} r_n z^n$, where
 $$
r_n=\frac{(h\Delta)^n}{n}-n\sum_{l=1}^{n}\frac{(h\Delta)^{l-1} i p (\Delta h)^{n-l+1}}{l}\ .
 $$

\item[$\diamond$] A $\BV$-algebra equipped with a series $R(z):=\sum_{n\ge1}R_n z^n$ satisfying Relation~(\ref{eqn:Conj}) is called a \emph{$BV/\Delta$-algebra} in \cite{KhoroshkinMarkarianShadrin11}, where this notion is studied in detail. 
\end{itemize}
\end{remarks}

\section{De Rham cohomology of Poisson manifolds}\label{sec:Poisson}

\begin{definition}[Frobenius manifold, \cite{Manin99}]
A \emph{(formal)  Frobenius manifold} is an algebra over the operad 
$ H_\bullet(\overline{\mathcal{M}}_{0, n+1})$ made up of the homology of the  Deligne-Mumford-Knudsen moduli spaces of stable genus $0$ curves.  
\end{definition}
This algebraic structure amounts to giving a collection of symmetric multilinear maps $\mu_n\colon A^{\otimes n} \to A$, for $n\ge 2$, of degree $|\mu_n|:=2(n-2)$ satisfying some quadratic relations, see \cite{Manin99} for instance. It is also called an \emph{hypercommutative algebra} in the literature. (Notice that we do not require here any 
non-degenerate pairing nor any unit). 

The operad $H_\bullet(\overline{\mathcal{M}}_{0, n+1})$ is Koszul, with Koszul dual cooperad $H^{\bullet+1}({\mathcal{M}}_{0, n+1})$, the cohomology groups  of the moduli spaces of genus $0$ curves. Algebras over the linear dual operad $H_{\bullet}({\mathcal{M}}_{0, n+1})$ are called \emph{gravity algebras} in the literature. 
The operadic cobar construction  $\Omega H^{\bullet}({\mathcal{M}}_{0, n+1})\stackrel{\sim}{\to} H_\bullet(\overline{\mathcal{M}}_{0, n+1})$ provides a resolution of the former operad, see \cite{Getzler95}.

\begin{definition}[Homotopy Frobenius manifold]
A  \emph{homotopy Frobenius manifold} is an algebra over the operad $\Omega H^{\bullet}({\mathcal{M}}_{0, n+1})$. 
\end{definition}

The operations defining such a structure are parametrised by $H^{\bullet}({\mathcal{M}}_{0, n+1})$. 
 Hence, a homotopy Frobenius manifold structure on a chain complex with trivial differential is made up of an infinite sequence of strata of multilinear operations, whose first stratum forms a Frobenius manifold. 
 
\begin{definition}[dg $\BV$-algebra]
A \emph{dg $\BV$-algebra} $(A, d,  \wedge, \Delta)$ is a differential graded commutative algebra equipped with a square-zero degree $1$ operator $\Delta$ of order less than $2$.
\end{definition}

The data of a dg $\BV$-algebra amounts to a mixed complex data $(A,d, \Delta)$ together with a compatible commutative product. We refer the reader to \cite[Section~$13.7$]{LodayVallette12} for more details on this notion. 

\smallskip

To any homotopy Frobenius manifold $H$, we can associate a \emph{rectified} dg $\BV$-algebra $Rec(H)$, see \cite[Section~$6.3$]{DrummondColeVallette11}. 

\begin{theorem}[\cite{DrummondColeVallette11}]\label{thm:HomotopyFrob}
Let $(A, d,  \wedge, \Delta)$ be a dg $\BV$-algebra equipped with a Hodge-to-de Rham degeneration data. 
The underlying homotopy groups $H(A,d)$ carry a homotopy Frobenius manifold structure, whose rectified dg $\BV$-algebra is homotopy equivalent to $A$.
\end{theorem}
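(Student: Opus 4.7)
The approach I would take combines the homotopy transfer theorem for commutative homotopy $\BV$-algebras, the gauge characterisation of the Hodge-to-de Rham condition from Theorem~\ref{thm:main}, and the rectification machinery of~\cite{DrummondColeVallette11}.

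First, I would regard the dg $\BV$-algebra $(A,d,\wedge,\Delta)$ as an algebra over a cofibrant resolution $\BV_\infty$ of the $\BV$ operad, in which the would-be higher operations simply vanish. Since a Hodge-to-de Rham degeneration data provides a deformation retract of chain complexes from $(A,d)$ onto $(H(A,d),0)$, the operadic homotopy transfer theorem applies and produces a $\BV_\infty$-structure on $H(A,d)$, together with an $\infty$-quasi-isomorphism extending the retract.

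Second, I would analyse this transferred structure. By Proposition~\ref{prop:Degeneration}, all the transferred multicomplex operations $\Delta'_n$ vanish on $H(A,d)$. By Theorem~\ref{thm:main}, this vanishing is realised by a gauge element $R(z)\in \End(A)[[z]]$ satisfying the conjugation equation~(\ref{eqn:Conj}). The Givental-type gauge action of $R(z)$ on the space of $\BV_\infty$-structures preserves the commutative-product sector while trivialising the $\Delta$-sector, so that the transferred structure collapses to a pure tower of hypercommutative operations.

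Third, I would invoke the operadic result from~\cite{DrummondColeVallette11}: a $\BV_\infty$-structure on a chain complex with trivial differential and trivial $\Delta$-sector is exactly a homotopy Frobenius manifold structure, i.e.\ an algebra over $\Omega H^{\bullet+1}({\mathcal{M}}_{0,n+1})$. This produces the desired homotopy Frobenius manifold on $H(A,d)$. For the rectification claim, I would appeal to the universal property of $Rec$: the $\infty$-quasi-isomorphism provided by the homotopy transfer theorem, together with the compatibility of $Rec$ with the gauge transformation induced by $R(z)$, yields a zigzag of quasi-isomorphisms of dg $\BV$-algebras connecting $Rec(H(A,d))$ with $A$. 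The main obstacle is this third step: one must verify both that a $\BV_\infty$-structure whose $\Delta$-sector is trivial genuinely assembles into an algebra over $\Omega H^{\bullet+1}({\mathcal{M}}_{0,n+1})$, and that the rectification construction is homotopy inverse, on quasi-isomorphism classes, to the procedure just described. The remaining steps are essentially formal once the minimal-model theorem and Theorem~\ref{thm:main} of the present paper are available.
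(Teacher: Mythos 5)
The paper does not prove this statement at all: it is quoted verbatim from \cite{DrummondColeVallette11} and used as a black box (the bracketed citation in the theorem header is the only ``proof'' offered). So the only meaningful comparison is with the argument in that reference, whose overall shape your sketch does capture: transfer the $\BV_\infty$-structure to $H(A,d)$ along the degeneration data, use the vanishing of the higher $\Delta$-operations to discard the circle-action sector, and identify what remains with an $\Omega H^{\bullet+1}({\mathcal{M}}_{0,n+1})$-algebra, then rectify.

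Two points in your write-up are genuinely problematic, however. First, the claim that ``the Givental-type gauge action of $R(z)$ preserves the commutative-product sector while trivialising the $\Delta$-sector'' is not justified and is in fact the opposite of what happens: the series $R(z)$ supplied by Theorem~\ref{thm:main} is merely an element of $\End(A)[[z]]$, so $e^{R(z)}$ is a linear gauge, not an algebra morphism, and conjugating a $\BV_\infty$-structure by it deforms the product into the whole tower of hypercommutative operations --- that is precisely how the nontrivial Frobenius-manifold operations arise (compare the graph sums of \cite{LosevShadrin07} and the $\BV/\Delta$ picture of \cite{KhoroshkinMarkarianShadrin11}). Theorem~\ref{thm:main} and the minimal model theorem of this paper live entirely in the category of multicomplexes and say nothing about compatibility with $\wedge$. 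Second, the step you yourself flag as ``the main obstacle'' --- that a $\BV_\infty$-structure with trivial differential and trivial $\delta$-labelled operations assembles into an algebra over $\Omega H^{\bullet+1}({\mathcal{M}}_{0,n+1})$, and that $Rec$ inverts the transfer up to homotopy --- is not a verification to be added at the end but is the principal theorem of \cite{DrummondColeVallette11}: it requires the explicit computation of the minimal model of the $\BV$ operad, which in turn rests on Getzler's theorem that the hypercommutative operad is Koszul with dual the gravity operad. Describing the remaining steps as ``essentially formal'' given only the results of the present paper therefore leaves the actual mathematical content of the theorem unproved; as a reconstruction of the cited proof your outline is incomplete at exactly the load-bearing point.
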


This result shows that the transferred homotopy Frobenius manifold faithfully encodes the homotopy type of the dg $\BV$-algebra $A$. It provides a refinement of a result of Barannikov and Kontsevich \cite{BarannikovKontsevich98}, where only the underlying Frobenius manifold structure is considered. This first stratum of operations can be described in terms of sums of labelled graphs, see \cite{LosevShadrin07}.

\begin{proposition}[\cite{Koszul85}]\label{prop:PoissonKoszul}
Let $(M, \omega)$ be a Poisson manifold. Its de Rham complex $(\Omega^\bullet(M), d_{DR}, \wedge, \Delta)$ is a dg $\BV$-algebra, with the operator $\Delta$ defined by 
 $$
\Delta:=i(\omega)d_{DR}-d_{DR}i(\omega)=[i(\omega),d_{DR}], 
 $$
where $i(-)\colon\Omega^\bullet(M)\to\Omega^{\bullet-2}(M)$ denotes the contraction operator.
\end{proposition}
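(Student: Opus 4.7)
The plan is to verify the three defining properties of a dg $\BV$-algebra: that $\Delta$ has degree $+1$ and anticommutes with $d_{DR}$, that $\Delta^{2}=0$, and that $\Delta$ is a differential operator of order at most $2$ on the commutative algebra $(\Omega^{\bullet}(M),\wedge)$.

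The first two formal properties come almost for free from $d_{DR}^{2}=0$. Since $i(\omega)$ lowers form-degree by $2$ and $d_{DR}$ raises it by $1$, the commutator $\Delta=[i(\omega),d_{DR}]$ shifts degree by the expected amount to match the convention $|\Delta|=+1$. The anticommutation relation amounts to showing $[d_{DR},\Delta]=0$, which follows immediately from graded Jacobi, $[d_{DR},[i(\omega),d_{DR}]]=\pm[[d_{DR},i(\omega)],d_{DR}]\pm[i(\omega),[d_{DR},d_{DR}]]$, once one uses $d_{DR}^{2}=0$; alternatively, by direct expansion, $\Delta\, d_{DR}$ and $-d_{DR}\Delta$ both collapse to $\pm d_{DR}\, i(\omega)\, d_{DR}$.

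For $\Delta^{2}=0$, I would again invoke the graded Jacobi identity applied to $[\Delta,\Delta]$:
\[ 2\Delta^{2}=[[i(\omega),d_{DR}],[i(\omega),d_{DR}]]= \bigl[[[i(\omega),d_{DR}],i(\omega)],d_{DR}\bigr]\pm \bigl[i(\omega),[\Delta,d_{DR}]\bigr]. \]
The second summand vanishes by the anticommutation established above. The inner commutator in the first summand is governed by Koszul's formula
\[ [[i(X),d_{DR}],i(Y)]=\pm\, i([X,Y]_{SN}) \]
valid for any multivector fields $X,Y$, where $[\cdot,\cdot]_{SN}$ denotes the Schouten--Nijenhuis bracket. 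Specialising to $X=Y=\omega$ and using the defining Poisson condition $[\omega,\omega]_{SN}=0$ then yields $[\Delta,i(\omega)]=0$, hence $\Delta^{2}=0$.

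For the order condition, contraction with a section of $\Lambda^{2}TM$ is a differential operator of order at most $2$ on $\Omega^{\bullet}(M)$, while $d_{DR}$ is a derivation (order at most $1$); since the graded commutator of operators of orders $\leq p$ and $\leq q$ has order $\leq p+q-1$, the operator $\Delta$ is of order at most $2$. The main obstacle is Koszul's identity, which can either be cited directly from \cite{Koszul85} or proved by induction on the arities of $X$ and $Y$: the case where both are vector fields reduces to Cartan's magic formula $[L_{X},i(Y)]=i([X,Y])$ together with the identification $L_{X}=[i(X),d_{DR}]$, and the general case follows by extending multiplicatively along the wedge product of multivectors using that $i$ is an algebra morphism.
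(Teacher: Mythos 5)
Your argument is correct and follows exactly the route the paper intends: the paper gives no proof of this proposition (it is cited from Koszul), but it immediately states the contraction identity $i([\omega_1,\omega_2])=-[[i(\omega_2),d],i(\omega_1)]$ as the key ingredient of Koszul's proof, and the computations you perform ($d_{DR}\Delta+\Delta d_{DR}=0$ by direct expansion, and $[\Delta,i(\omega)]=\pm i([\omega,\omega])=0$ via graded Jacobi) are precisely the ones the paper carries out later in Lemma~\ref{DeltaSquaredOnJacobi} and Theorem~\ref{PoissonFrob}. Your treatment of the order condition (commutator of operators of orders $\leq 2$ and $\leq 1$ has order $\leq 2$) is the standard argument and is sound.
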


In particular, $\Omega^\bullet(M)$ becomes a mixed complex, the \emph{canonical double complex} of Brylinski~\cite{Brylinski88}.

\smallskip

Koszul's proof of this result relies on the following relation between the contraction operators, the Schouten--Nijenhuis bracket, and the de Rham differential, which we shall use throughout the paper.

\begin{proposition}[\cite{Koszul85,Marle97}]
For every smooth manifold $M$, and every polyvector fields $\omega_1, \omega_2$,
 $$
i([\omega_1,\omega_2])=-[[i(\omega_2),d],i(\omega_1)].
 $$ 
\end{proposition}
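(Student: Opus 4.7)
The plan is to establish this classical Cartan-type identity by reducing to the case when $\omega_1$ and $\omega_2$ are vector fields or functions, exploiting that both sides of the equality define biderivations with respect to the wedge product on polyvector fields.

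First I would verify the base case where $\omega_1 = X$ and $\omega_2 = Y$ are vector fields. Cartan's magic formula identifies $[i(Y), d]$ with the Lie derivative $L_Y$, and the standard commutation relation $[L_Y, i(X)] = i([Y, X])$ then gives
$$-[[i(Y), d], i(X)] = -[L_Y, i(X)] = -i([Y, X]) = i([X, Y]),$$
as required. The mixed and pure function cases are verified by direct one-line computations: $i(f)$ is multiplication by $f$, one checks $[L_Y, f \cdot] = Y(f)\cdot$ and $[[g \cdot, d], f \cdot] = 0$, while on the Schouten--Nijenhuis side $[f, g] = 0$ and $[f, Y] = -Y(f)$ match after including the sign in front of the double commutator.

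For the inductive step I would show that both sides are graded biderivations in $\omega_1$ and $\omega_2$ with respect to the wedge product, and then invoke that the algebra of polyvector fields is generated under $\wedge$ by functions and vector fields. For the left-hand side, this biderivation property is built into the defining characterisation of the Schouten--Nijenhuis bracket (graded Leibniz in each argument) combined with the graded algebra morphism property of the contraction, $i(\omega \wedge \omega') = i(\omega) \circ i(\omega')$. For the right-hand side, the generalised Lie derivative $L_\omega := [i(\omega), d]$ inherits a graded Leibniz rule in $\omega$ from the fact that $d$ is a graded derivation of the de Rham algebra, and the outer graded commutator with $i(\omega_1)$ then inherits biderivation behaviour in both slots.

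The main obstacle is the Koszul sign bookkeeping. The Schouten--Nijenhuis bracket naturally lives on polyvector fields whose degrees are shifted by one, the contraction operator $i(\omega)$ has degree $-|\omega|$, and the graded commutators of operators contribute further signs. Matching the coefficient signs in the Leibniz rule for $i([-,-])$ with those produced by $[L_{(-)}, i(-)]$ requires careful tracking of these conventions, but once that is done the argument is structural and presents no conceptual difficulty.
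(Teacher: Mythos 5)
The paper does not prove this proposition itself---it is quoted from Koszul and Marle---so there is no in-paper argument to compare against; your proposal reconstructs the standard proof from those references (check on functions and vector fields, then extend by the biderivation property over the wedge product), and the base cases you compute are correct. Two small points of care: the Leibniz rule for $L_{\omega}=[i(\omega),d]$ in $\omega$ comes from the graded operator identity $[AB,C]=A[B,C]\pm[A,C]B$ applied to $i(\omega\wedge\omega')=i(\omega)i(\omega')$, not really from $d$ being a derivation of the de Rham algebra; and in the induction over the second slot the expansion of the outer commutator produces cross-terms involving $[i(\omega_1),i(\omega_2)]$, which you need to observe vanish (contractions graded-commute) for the recursion to close. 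With those details supplied, the argument is complete and is the same one as in the cited sources.
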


\begin{theorem}\label{PoissonFrob}
The de Rham cohomology of a Poisson manifold~$(M, \omega)$ carries a natural homotopy Frobenius manifold structure, whose rectified dg $\BV$-algebra is homotopy equivalent to $(\Omega^\bullet(M), d_{DR}, \wedge, \Delta)$.
\end{theorem}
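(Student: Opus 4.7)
The plan is to apply Theorem~\ref{thm:HomotopyFrob} to the dg $\BV$-algebra of Proposition~\ref{prop:PoissonKoszul}, so the only thing I need to produce is a Hodge-to-de Rham degeneration data for the underlying mixed complex $(\Omega^\bullet(M), d_{DR}, \Delta)$. By Theorem~\ref{thm:main}, this in turn reduces to exhibiting a series $R(z)\in\End(\Omega^\bullet(M))[[z]]$ satisfying the gauge Hodge condition
$$e^{R(z)}\,d_{DR}\,e^{-R(z)} \,=\, d_{DR} + \Delta\,z.$$

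My candidate, essentially forced by the Koszul formula $\Delta=[i(\omega),d_{DR}]$, is the one-term gauge $R(z):=i(\omega)\,z$. I would verify the condition by expanding the conjugation through the standard adjoint formula
$$e^{i(\omega)z}\,d_{DR}\,e^{-i(\omega)z} \,=\, \sum_{n\ge 0}\frac{z^n}{n!}\,\mathrm{ad}^{n}_{i(\omega)}(d_{DR}),$$
noting that the $n=0$ term is $d_{DR}$ and the $n=1$ term is exactly $\Delta$, and then showing that all higher iterated commutators vanish. This reduces to the single identity $[i(\omega),\Delta]=0$, which I would derive by applying the Koszul--Marle formula with $\omega_1=\omega_2=\omega$ to obtain
$$[[i(\omega),d_{DR}],i(\omega)] \,=\, -i([\omega,\omega]_{\mathrm{SN}}),$$
and concluding via the Poisson integrability condition $[\omega,\omega]_{\mathrm{SN}}=0$. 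Graded antisymmetry of the commutator then gives $[i(\omega),\Delta]=0$, so the entire Baker--Campbell--Hausdorff expansion truncates after the linear term in $z$.

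Once the gauge Hodge condition has been established, Theorem~\ref{thm:main} ensures that every deformation retract of $(\Omega^\bullet(M),d_{DR})$ onto its de Rham cohomology (such a retract always exists over a field of characteristic zero) constitutes a valid Hodge-to-de Rham degeneration data. Plugging this data into Theorem~\ref{thm:HomotopyFrob} would then yield the required homotopy Frobenius manifold structure on $H^\bullet_{DR}(M)$, together with the homotopy equivalence between its rectified dg $\BV$-algebra and $(\Omega^\bullet(M),d_{DR},\wedge,\Delta)$. The naturality is to be read as the fact that, by Theorem~\ref{thm:main}, the transferred structure is independent of the auxiliary choice of retract up to $\infty$-isotopy, and that the wedge product is recovered on the first stratum of operations.

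I do not anticipate any serious obstacle: the heart of the argument is a single graded commutator identity, and the Poisson integrability hypothesis is precisely what is needed to make it vanish. The only care required is in tracking the graded signs in the adjoint expansion and in the Koszul--Marle identity, together with the bookkeeping ensuring that no higher operators $\Delta_n$ for $n\ge 2$ are needed in the gauge Hodge condition because we are dealing with an honest mixed complex.
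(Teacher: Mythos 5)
Your proposal is correct and follows essentially the same route as the paper: take the gauge $R(z)=i(\omega)z$, observe that $\mathrm{ad}^2_{i(\omega)}(d_{DR})=[i(\omega),\Delta]=\pm\, i([\omega,\omega])=0$ by the Koszul--Marle identity and the Poisson condition, so the conjugation truncates to $d_{DR}+\Delta z$, and then invoke Theorems~\ref{thm:main} and~\ref{thm:HomotopyFrob}. No gaps.
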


\begin{proof}
The operators of Proposition~\ref{prop:PoissonKoszul} satisfy
 $$[i(\omega),[i(\omega),d_{DR}]]= -[[i(\omega), d_{DR}],   i(\omega)]= i([\omega, \omega])=0 \ , $$
 where $i(-)$ denotes the contraction of differential forms by vector fields.
This, together with the fact that 
 $$
e^{R(z)} \, d\,  e^{-R(z)}=e^{\mathop{\mathrm{ad}}_{R(z)}}(d), \ \text{for any}\  R(z)\in \End(A)[[z]]
 $$
immediately implies that
 $$ e^{i(\omega)z} \, d_{DR}\,  e^{-i(\omega)z}= d_{DR} +  \Delta  z. $$
So by Theorem~\ref{thm:main}, $\Omega^\bullet(M)$ admits a Hodge-to-de Rham degeneration data, and Theorem~\ref{thm:HomotopyFrob} applies.
\end{proof}

This result refines the Lie formality theorem of \cite{SharyginTalalev08}, since the transferred $L_\infty$-algebra structure is trivial. 
Note that gauge theoretic methods were already used (independently) in 
 \cite{FiorenzaManetti11}
  to obtain a conceptual proof of the former theorem.

\begin{corollary}[\cite{FernandezIbanezLeon}]
For every Poisson manifold~$M$, the spectral sequence for the double complex $(\Omega^\bullet(M), d_{DR}, \Delta)$ degenerates on the first page.
\end{corollary}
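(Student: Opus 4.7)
The plan is to observe that this corollary is essentially immediate from what has already been proved in Theorem~\ref{PoissonFrob}, combined with Proposition~\ref{prop:Degeneration}. Specifically, Proposition~\ref{prop:Degeneration} states that the spectral sequence of a multicomplex degenerates at the first page if and only if a Hodge-to-de Rham degeneration data exists. The proof of Theorem~\ref{PoissonFrob} established exactly such data for the canonical mixed complex $(\Omega^\bullet(M), d_{DR}, \Delta)$ by exhibiting a gauge series satisfying the conjugation relation~(\ref{eqn:Conj}).

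In more detail, I would first recall that for a Poisson manifold $(M, \omega)$, the identity $[\omega, \omega] = 0$ coming from the Jacobi identity for the Poisson bivector, combined with Koszul's relation for contractions and brackets (Proposition, following Proposition~\ref{prop:PoissonKoszul}), yields
 $$
[i(\omega), [i(\omega), d_{DR}]] = -i([\omega, \omega]) = 0 \ .
 $$
Hence the two operators $d_{DR}$ and $\Delta = [i(\omega), d_{DR}]$, together with the single contraction $i(\omega)$, sit inside a very small nilpotent Lie subalgebra of $\End(\Omega^\bullet(M))$. The formula $e^{\mathop{\mathrm{ad}}_{R(z)}}(d_{DR}) = e^{R(z)}d_{DR}e^{-R(z)}$ then simplifies dramatically when one chooses $R(z) := i(\omega)z$, collapsing to
 $$
e^{i(\omega)z}\, d_{DR}\, e^{-i(\omega)z} = d_{DR} + \Delta\, z \ ,
 $$
which is precisely the gauge Hodge condition~(\ref{eqn:Conj}) in the mixed complex case.

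By Theorem~\ref{thm:main}, this gauge equivalence is equivalent to the existence of a Hodge-to-de Rham degeneration data for the mixed complex $(\Omega^\bullet(M), d_{DR}, \Delta)$. Applying Proposition~\ref{prop:Degeneration} to this multicomplex (with only $\Delta_0 = d_{DR}$ and $\Delta_1 = \Delta$ nonzero) yields that the associated spectral sequence, which is the usual bicomplex spectral sequence of Brylinski's canonical double complex, degenerates at page $E^1$.

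There is essentially no obstacle here, since the hard work is already packaged in Theorem~\ref{thm:main} and in the identity $[i(\omega),\Delta] = 0$ exploited in Theorem~\ref{PoissonFrob}. The only minor care needed is the verification that the bicomplex spectral sequence of $(\Omega^\bullet(M), d_{DR}, \Delta)$ indexed in the classical way coincides with the multicomplex spectral sequence $E^r(A)$ defined in Section~\ref{sec:MixedMulti}, a point already addressed in the paragraph following the proof of Proposition~\ref{prop:Degeneration}.
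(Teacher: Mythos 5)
Your proposal is correct and follows exactly the paper's route: the paper's proof is simply ``By Proposition~\ref{prop:Degeneration}'', relying on the Hodge-to-de Rham degeneration data already established in Theorem~\ref{PoissonFrob} via the gauge Hodge condition $e^{i(\omega)z}d_{DR}e^{-i(\omega)z}=d_{DR}+\Delta z$ and Theorem~\ref{thm:main}. You have merely spelled out the same chain of implications in more detail.
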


\begin{proof}
By Proposition~\ref{prop:Degeneration}.
\end{proof}

Using the same argument as in Theorem~\ref{PoissonFrob} with $\Delta$ replaced by~$h\Delta$, one can prove the following result, which generalises and simplifies the proofs of the respective results of~\cite{CaoZhouQuant99,Shur04}. We refer to~\cite{CaoZhouQuant99} for respective definitions. The only warning we wish to make here is that in this case one must work over the commutative ring $\KK[h]$ instead of working over a field. However, the differential $\Delta_0=d_{DR}$ on the quantum de Rham complex $\Omega^\bullet(M)[h]$ does not depend on~$h$, which guarantees the projectivity of all modules that have to be assumed projective in order for the homotopy transfer machinery to work.

\begin{theorem}\label{QuantumPoissonFrob}
The quantum de Rham cohomology $Q_hH^*_{DR}(M)$ of a Poisson manifold~$(M, \omega)$ is a deformation quantisation of its de Rham cohomology:
 $$
Q_hH^*_{DR}(M)\cong H^*_{DR}(M)[h].
 $$
It carries a natural homotopy Frobenius manifold structure, whose rectified dg $\BV$-algebra is homotopy equivalent to $(\Omega^\bullet(M)[h], d_{DR}, \wedge, h\Delta)$.
\end{theorem}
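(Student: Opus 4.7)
The plan is to repeat the proof of Theorem \ref{PoissonFrob} with $\Delta$ replaced by $h\Delta$, working throughout over the ground ring $\KK[h]$ rather than the field $\KK$. First I would verify that $(\Omega^\bullet(M)[h], d_{DR}, \wedge, h\Delta)$ is a dg $\BV$-algebra over $\KK[h]$: the identities $(h\Delta)^2 = 0$, $d_{DR}(h\Delta) + (h\Delta) d_{DR} = 0$, and the order $\le 2$ property of $h\Delta$ with respect to $\wedge$ all follow from the corresponding identities for $\Delta$ by $\KK[h]$-linearity, since $h$ is a central even scalar. By construction this is the $\BV$-algebra underlying the quantum de Rham complex of \cite{CaoZhouQuant99}.

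Next I would produce the gauge element. Taking
$$R(z) := h\, i(\omega)\, z \in \End(\Omega^\bullet(M)[h])[[z]],$$
the Koszul-type computation used in Theorem \ref{PoissonFrob} gives $[R(z), d_{DR}] = h z [i(\omega), d_{DR}] = h \Delta z$ and
$$[R(z), h \Delta z] = h^2 z^2 \, [i(\omega), [i(\omega), d_{DR}]] = -h^2 z^2 \, i([\omega, \omega]) = 0.$$
Hence $e^{\mathop{\mathrm{ad}}_{R(z)}}(d_{DR})$ truncates after the first commutator to $d_{DR} + h \Delta z$, which is precisely Condition~(\ref{eqn:Conj}) for the mixed complex $(\Omega^\bullet(M)[h], d_{DR}, h\Delta)$. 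So the gauge Hodge condition is satisfied.

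The main subtle point is that Theorems \ref{thm:main} and \ref{thm:HomotopyFrob} were stated over a field, whereas we now need to work over $\KK[h]$. Since $d_{DR}$ does not involve $h$, we have an isomorphism of $\KK[h]$-chain complexes $(\Omega^\bullet(M)[h], d_{DR}) \cong (\Omega^\bullet(M), d_{DR}) \otimes_\KK \KK[h]$. A choice of deformation retract of $\Omega^\bullet(M)$ onto $H^*_{DR}(M)$ over $\KK$ therefore yields, by base change, a deformation retract of $\Omega^\bullet(M)[h]$ onto the free $\KK[h]$-module $H^*_{DR}(M)[h]$, and in particular $H(\Omega^\bullet(M)[h], d_{DR}) \cong H^*_{DR}(M)[h]$, which proves the deformation quantisation statement $Q_h H^*_{DR}(M) \cong H^*_{DR}(M)[h]$. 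All modules appearing in this retract are free, hence projective, over $\KK[h]$, so the homotopy transfer machinery of Proposition \ref{prop:HTT} and its operadic refinement used in Theorem \ref{thm:HomotopyFrob} go through verbatim in this setting.

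It remains to combine the pieces: the gauge $R(z)$ of the second paragraph, together with the deformation retract of the third paragraph, fulfils the hypotheses of Theorem \ref{thm:main} applied over $\KK[h]$, producing a Hodge-to-de Rham degeneration data for $(\Omega^\bullet(M)[h], d_{DR}, h\Delta)$. Theorem \ref{thm:HomotopyFrob}, applied to the dg $\BV$-algebra $(\Omega^\bullet(M)[h], d_{DR}, \wedge, h\Delta)$, then endows $Q_h H^*_{DR}(M) \cong H^*_{DR}(M)[h]$ with a homotopy Frobenius manifold structure whose rectified dg $\BV$-algebra is homotopy equivalent to the original. The only genuine obstacle, signalled by the authors' parenthetical warning, is precisely the passage from a field to $\KK[h]$; it is resolved as above by observing that $d_{DR}$ carries no $h$-dependence, so one never leaves the world of free, and in particular projective, $\KK[h]$-modules.
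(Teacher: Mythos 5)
Your proposal is correct and is essentially the argument the paper intends: the paper itself offers only the remark that one repeats the proof of Theorem~\ref{PoissonFrob} with $\Delta$ replaced by $h\Delta$ over the ring $\KK[h]$, the key point being that $d_{DR}$ carries no $h$-dependence, so every module in sight is free and hence projective. Your second, third and fourth paragraphs flesh out exactly this, and the gauge element $R(z)=h\,i(\omega)\,z$, whose double commutator with $d_{DR}$ vanishes because $[\omega,\omega]=0$, is the right one.

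One sentence needs tightening. The quantum de Rham cohomology $Q_hH^*_{DR}(M)$ is by definition the cohomology of the \emph{quantum} differential (the total differential $d_{DR}+h\Delta$ of the mixed complex), not of $d_{DR}$ alone, so the claim in your third paragraph that $H(\Omega^\bullet(M)[h],d_{DR})\cong H^*_{DR}(M)[h]$ already ``proves the deformation quantisation statement'' skips a step. The gap is closed by material you already have on the page: specialising your gauge identity at $z=1$, the operator $e^{h\,i(\omega)}$ (a finite sum on each form, since $i(\omega)$ lowers form degree by $2$) is an automorphism of $\Omega^\bullet(M)[h]$ conjugating $d_{DR}$ into $d_{DR}+h\Delta$, and therefore induces the isomorphism $Q_hH^*_{DR}(M)\cong H(\Omega^\bullet(M)[h],d_{DR})\cong H^*_{DR}(M)[h]$. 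Equivalently, the Hodge-to-de Rham degeneration data produced in your last paragraph forces the associated spectral sequence to degenerate at the first page by Proposition~\ref{prop:Degeneration}, which together with freeness over $\KK[h]$ yields the same conclusion.
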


\section{De Rham cohomology of Jacobi manifolds}\label{sec:Jacobi}

It turns out that the above argument can be generalised even further, namely to Jacobi manifolds. 

\begin{definition}[Jacobi manifold \cite{Lichnerowicz78}]
A \emph{Jacobi manifold} is a smooth manifold $M$ equipped with a pair  
 $$
(\omega,E)\in\Gamma(\Lambda^2(TM))\times\Gamma(TM),
 $$ 
for which
 $$
[\omega,\omega]=2E\wedge\omega, \quad [E,\omega]=0. 
 $$
\end{definition}

We consider again the space of differential forms equipped with the order $2$ operator $\Delta:=[i(\omega), d_{DR}]$. It is easy to check that it anticommutes with the de Rham differential: $d_{DR}\Delta+\Delta d_{DR}=0$. Unlike the previous case of Poisson manifolds, the operator $\Delta$ does not square to $0$ on every form of a Jacobi manifold. 

An obvious way to generalise Theorem~\ref{PoissonFrob} is hinted at by a result on a spectral sequence degeneration from~\cite{ChineaMarreroLeon}, and is concerned with basic differential forms.

\begin{definition}[Basic differential form \cite{ChineaMarreroLeon}]
A differential form $\alpha\in\Omega^{k}(M)$ is said to be \emph{basic} if 
 $$
i(E)(\alpha)=i(E)(d_{DR}\alpha)=0 \ .
 $$
We denote the space of basic differential forms by $\Omega_B^\bullet(M)$. Clearly, $(\Omega_B^\bullet(M), d_{DR},  \wedge)$ is a dg sub-algebra of $(\Omega^\bullet(M), d_{DR},  \wedge)$.
\end{definition}

The following is proved in~\cite{ChineaMarreroLeon}; we reproduce the proof here since the computations will be used in our further result, and some of the formulas in the proof are different due to a mismatch of sign conventions~\cite{Marle97}.

\begin{lemma}[\cite{ChineaMarreroLeon}]\label{DeltaSquaredOnJacobi}
The operator $\Delta$ preserves the space of basic differential forms and its  restriction to it squares to zero
$$\Delta^2|_{\Omega_{B}^\bullet(M)}=0 \ . $$
\end{lemma}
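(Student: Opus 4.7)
My plan is to derive both claims from two operator identities obtained by feeding the Jacobi axioms $[\omega,\omega] = 2E\wedge\omega$ and $[E,\omega] = 0$ into the Koszul bracket-contraction formula $i([\omega_1,\omega_2]) = -[[i(\omega_2),d],i(\omega_1)]$ stated just above the lemma. Applying it to $[E,\omega]=0$ gives $[\Delta,i(E)] = 0$, while applying it with $\omega_1=\omega_2=\omega$ gives $[\Delta,i(\omega)] = -i([\omega,\omega]) = -2\,i(E\wedge \omega)$. Throughout, $[\cdot,\cdot]$ denotes the graded commutator of operators and $d = d_{DR}$.

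For the first statement (preservation of basic forms), I would use the identity $[\Delta,i(E)]=0$, which says that $\Delta$ and $i(E)$ graded-commute; since $|\Delta|=1$ and $|i(E)|=-1$, this means $\Delta\,i(E) = -i(E)\,\Delta$. For $\alpha$ basic we then get $i(E)\Delta\alpha = -\Delta\,i(E)\alpha = 0$ directly, and $i(E)\,d\Delta\alpha = -i(E)\Delta\,d\alpha = \Delta\,i(E)\,d\alpha = 0$, where I used the standard multicomplex relation $d\Delta + \Delta d = 0$ (which is immediate from $d^2=0$ and the definition $\Delta=[i(\omega),d]$). Hence $\Delta\alpha \in \Omega_B^\bullet(M)$.

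For $\Delta^2|_{\Omega_B^\bullet(M)} = 0$, the key step—and the only really delicate point—is to rewrite $\Delta^2 = \tfrac12[\Delta,\Delta]$ in a form that manifestly factors through $i(E)$. I would apply the graded Jacobi identity to $[[i(\omega),d],\Delta]$:
\begin{equation*}
[[i(\omega),d],\Delta] + [[d,\Delta],i(\omega)] - [[\Delta,i(\omega)],d] = 0.
\end{equation*}
The middle term vanishes because $[d,\Delta]=0$, the first is $[\Delta,\Delta]=2\Delta^2$, and substituting $[\Delta,i(\omega)]= -2\,i(E\wedge\omega)$ in the last term yields
\begin{equation*}
\Delta^2 \;=\; -\,[\,i(E\wedge\omega),\,d\,] \;=\; -\,i(E\wedge\omega)\,d \;-\; d\,i(E\wedge\omega).
\end{equation*}

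Now I would invoke the convention for contractions with decomposable multivectors, $i(E\wedge\omega) = i(\omega)\circ i(E)$ (so that $i(E\wedge\omega)\beta$ evaluates $\beta$ first on $E$), which makes $i(E\wedge\omega)$ factor through $i(E)$ on the right. For $\alpha$ basic, both $i(E)\alpha=0$ and $i(E)\,d\alpha = 0$, so $i(E\wedge\omega)\alpha = 0$ and $i(E\wedge\omega)\,d\alpha = 0$, and therefore $\Delta^2\alpha = 0$. The main obstacle is purely bookkeeping: choosing a consistent sign convention for the contraction operators and the graded bracket so that the Jacobi manipulation above produces $i(E\wedge\omega)$ rather than an expression that still mixes $\omega$ and $E$ in a non-factorable way; once the conventions are fixed in line with Marle's reference cited in the excerpt, everything reduces to the two Koszul identities and one application of graded Jacobi.
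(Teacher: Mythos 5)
Your proof is correct, and it establishes exactly the two identities the paper's proof rests on: $[\Delta,i(E)]=0$ (hence stability of basic forms) and $\Delta^2=-\bigl(i(E\wedge\omega)\,d_{DR}+d_{DR}\,i(E\wedge\omega)\bigr)$, which is the paper's Formula~\eqref{deltasquared}. The only real difference is in how you reach the second identity: the paper expands $\Delta^2$ explicitly into the three terms of~\eqref{deltasquaredtemp} and then eliminates $i(\omega)d_{DR}i(\omega)$ using the computed bracket $[i(\omega),\Delta]=2i(E)i(\omega)$, whereas you obtain the same relation in one stroke from the graded Jacobi identity applied to $[[i(\omega),d_{DR}],\Delta]$, using $[d_{DR},\Delta]=0$ and $[\Delta,i(\omega)]=-2i(E\wedge\omega)$. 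Your route is cleaner and less error-prone on signs (the cyclic Jacobi signs you wrote are consistent with the parities $|i(\omega)|$ even, $|d_{DR}|,|\Delta|$ odd); the paper's direct expansion has the minor virtue of producing the intermediate Formula~\eqref{i-omega-i} explicitly. Likewise, for the first claim you read $[\Delta,i(E)]=0$ straight off the Koszul formula applied to $[E,\omega]=0$, while the paper verifies the same anticommutation by a hand computation via $L_E=[d_{DR},i(E)]$. One small remark: the factorisation $i(E\wedge\omega)=i(\omega)\circ i(E)$ versus the paper's $i(E)\circ i(\omega)$ is immaterial, since the two contraction operators commute ($E\wedge\omega=\omega\wedge E$), so either ordering lets you conclude $i(E\wedge\omega)\alpha=i(E\wedge\omega)d_{DR}\alpha=0$ on basic forms.
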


\begin{proof}
First, let us note that the de Rham differential anti-commutes with $\Delta$:
\begin{equation}\label{anticommute}
d_{DR}\Delta+\Delta d_{DR}=d_{DR}(i(\omega)d_{DR}-d_{DR}i(\omega))+(i(\omega)d_{DR}-d_{DR}i(\omega))d_{DR}=0\ .
\end{equation}
A similar computation shows that the de Rham differential anti-commutes with $i(E)$:
\begin{multline*}
i(E)\Delta+\Delta i(E)=i(E)(i(\omega)d_{DR}-d_{DR}i(\omega))+(i(\omega)d_{DR}-d_{DR}i(\omega))i(E)=\\=
i(\omega)i(E)d_{DR}+(d_{DR}i(E)-L_E)i(\omega)+i(\omega)(-i(E)d_{DR}+L_E)-d_{DR}i(E)i(\omega)=\\=
-L_E i(\omega)+i(\omega)L_E=-i([E,\omega])=0.
\end{multline*}
This implies that whenever $\alpha$ is a basic form, the form $\Delta\alpha$ is also basic, since $i(E)\Delta\alpha=-\Delta i(E)\alpha=0$ and
$i(E)d_{DR}\Delta\alpha=-i(E)\Delta d_{DR}\alpha=\Delta i(E)d_{DR}\alpha=0$, so basic forms are stable under the operator $\Delta$. To prove that $\Delta^2=0$ on basic forms, we note that
\begin{equation}\label{adjointsquared}
[i(\omega),\Delta]=-[\Delta, i(\omega)]=-[[i(\omega),d_{DR}],i(\omega)]=i([\omega,\omega])=2i(E\wedge\omega)=2i(E)i(\omega).
\end{equation}
Furthermore, we have
\begin{equation}\label{deltasquaredtemp}
\Delta^2=d_{DR}i(\omega)d_{DR}i(\omega)+i(\omega)d_{DR}i(\omega)d_{DR}-d_{DR}i(\omega)^2d_{DR}. 
\end{equation}
To simplify that latter expression, we compute
\begin{multline*}
[i(\omega),\Delta]=i(\omega)(i(\omega)d_{DR}-d_{DR}i(\omega))-(i(\omega)d_{DR}-d_{DR}i(\omega))i(\omega)=\\
=-2i(\omega)d_{DR}i(\omega)+i(\omega)^2d_{DR}+d_{DR}i(\omega)^2,
\end{multline*}
hence
\begin{equation}\label{i-omega-i}
2i(\omega)d_{DR}i(\omega)=i(\omega)^2d_{DR}+d_{DR}i(\omega)^2-2i(E)i(\omega),
\end{equation}
which allows us to simplify Formula~\eqref{deltasquaredtemp} into
\begin{equation}\label{deltasquared}
\Delta^2+i(E)i(\omega)d_{DR}+d_{DR}i(E)i(\omega)=0. 
\end{equation}
Thus, on basic forms $\Delta^2=0$, which completes the proof. 
\end{proof}

We conclude that for every Jacobi manifold $(M, \omega, E)$, $(\Omega_B^\bullet(M), d_{DR}, \Delta)$ becomes a mixed complex. This mixed complex is called the \emph{canonical double complex} of the Jacobi manifold~$M$ in~\cite{ChineaMarreroLeon}.

\begin{proposition}
The space of basic differential forms $(\Omega^\bullet_B(M), d_{DR}, \wedge, \Delta)$ forms a dg $\BV$-algebra. 
\end{proposition}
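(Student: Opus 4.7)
The plan is to verify the three items still needed to upgrade the mixed complex of the previous lemma to a dg $\BV$-algebra on $\Omega_B^\bullet(M)$: (i) closure of basic forms under the wedge product, making $(\Omega_B^\bullet(M),d_{DR},\wedge)$ a commutative dga, (ii) compatibility of $\Delta$ with $d_{DR}$ and the identity $\Delta^2=0$, and (iii) the fact that $\Delta$ is a differential operator of order $\le 2$ with respect to the wedge product.

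For (i), I would use that $i(E)$ and $d_{DR}$ are graded derivations of the wedge product. Given basic forms $\alpha,\beta$, the identity
\begin{equation*}
i(E)(\alpha\wedge\beta)=i(E)(\alpha)\wedge\beta+(-1)^{|\alpha|}\alpha\wedge i(E)(\beta)
\end{equation*}
vanishes since both $i(E)\alpha$ and $i(E)\beta$ do. Expanding $i(E)d_{DR}(\alpha\wedge\beta)$ using the derivation property of $d_{DR}$ followed by that of $i(E)$ produces four terms, each containing one of $i(E)\alpha$, $i(E)\beta$, $i(E)d_{DR}\alpha$, $i(E)d_{DR}\beta$ as a factor, hence each vanishes. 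Thus $\alpha\wedge\beta\in\Omega_B^\bullet(M)$, and closure follows.

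Step (ii) is already in hand: the previous lemma shows that $\Delta$ preserves $\Omega_B^\bullet(M)$ and that $\Delta^2$ vanishes there, while equation~\eqref{anticommute} gives $d_{DR}\Delta+\Delta d_{DR}=0$. Combined with (i), this yields a mixed complex compatible with the commutative dga structure.

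It remains to check (iii), that $\Delta$ is a differential operator of order $\le 2$ on the commutative algebra $(\Omega^\bullet(M),\wedge)$; the property then automatically descends to the subalgebra $\Omega_B^\bullet(M)$. Here I would apply Koszul's classical argument verbatim: contraction $i(\omega)$ by a bivector is an operator of order $\le 2$ on $(\Omega^\bullet(M),\wedge)$, the de Rham differential $d_{DR}$ is a derivation hence of order $\le 1$, and the graded commutator of operators of orders $p$ and $q$ has order $\le p+q-1$. Therefore $\Delta=[i(\omega),d_{DR}]$ has order $\le 2$ already on $\Omega^\bullet(M)$. The main conceptual point—rather than a technical obstacle—is the interplay between the Poisson and Jacobi cases: the only place where the Poisson identity $[\omega,\omega]=0$ entered Koszul's original proof was in showing $\Delta^2=0$, and in the Jacobi setting this is exactly what the previous lemma supplies once one restricts to basic forms. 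Every other $\BV$-axiom holds verbatim for any bivector on any smooth manifold, so no ingredient beyond the material already established is needed.
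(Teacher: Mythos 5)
Your proof is correct and follows the same route as the paper, whose own proof is just the one-line remark that the statement is a direct consequence of the preceding arguments together with Lemma~\ref{DeltaSquaredOnJacobi}; you have simply made explicit the three ingredients (closure of basic forms under $\wedge$, the mixed-complex identities from the lemma, and the order-$\le 2$ property of $\Delta=[i(\omega),d_{DR}]$ via Koszul's commutator argument) that the paper leaves implicit. No gap.
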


\begin{proof}
This is a direct consequence of the aforementioned arguments, including Lemma~\ref{DeltaSquaredOnJacobi}.
\end{proof}

\begin{theorem}\label{JacobiFrobBasic}
The basic de Rham cohomology of a Jacobi manifold~$(M, \omega, E)$ carries a natural homotopy Frobenius manifold structure, whose rectified dg $\BV$-algebra is homotopy equivalent to the basic de Rham algebra $(\Omega^\bullet_B(M), d_{DR}, \wedge, \Delta)$.
\end{theorem}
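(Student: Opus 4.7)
The plan is to follow the same two-step reduction as in the proof of Theorem~\ref{PoissonFrob}: combining Theorem~\ref{thm:HomotopyFrob} with Theorem~\ref{thm:main}, it suffices to exhibit a power series $R(z) \in \End(\Omega^\bullet_B(M))[[z]]$ satisfying the conjugation identity~(\ref{eqn:Conj}) on the mixed complex $(\Omega^\bullet_B(M), d_{DR}, \Delta)$, since the preceding proposition already equips the basic de Rham complex with its dg $\BV$-algebra structure. The Hodge-to-de Rham degeneration data produced this way will then transfer a homotopy Frobenius manifold structure onto the basic de Rham cohomology, whose rectified dg $\BV$-algebra will be homotopy equivalent to $(\Omega^\bullet_B(M), d_{DR}, \wedge, \Delta)$.

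Mimicking the Poisson case, the natural candidate is $R(z) := i(\omega) z$. The first and most delicate point is to verify that $i(\omega)$ actually acts on the subspace of basic forms. For a basic form $\alpha$, the vanishing of $i(E) i(\omega) \alpha$ follows from the supercommutation $i(E) i(\omega) = i(\omega) i(E)$ of the contraction operators, which holds since $E$ is odd and $\omega$ is even. For the second defining condition of basicness, I would write $d_{DR} i(\omega) = i(\omega) d_{DR} - \Delta$ and then combine the same supercommutation with the identity $\Delta i(E) + i(E) \Delta = 0$ established inside the proof of Lemma~\ref{DeltaSquaredOnJacobi} to conclude that $i(E) d_{DR} i(\omega) \alpha = 0$.

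With $R(z) = i(\omega) z$ acting on $\Omega^\bullet_B(M)$, I would compute $e^{R(z)} d_{DR} e^{-R(z)} = e^{\mathop{\mathrm{ad}}_{R(z)}}(d_{DR})$ using the iterated commutators already recorded in the proof of Lemma~\ref{DeltaSquaredOnJacobi}: by definition $[i(\omega), d_{DR}] = \Delta$; by Formula~(\ref{adjointsquared}), $[i(\omega), \Delta] = 2 i(E) i(\omega)$; and finally $[i(\omega), i(E) i(\omega)] = 0$, again by the even--odd supercommutation of $i(\omega)$ with $i(E)$. The Baker--Campbell--Hausdorff expansion therefore truncates to
$$
e^{\mathop{\mathrm{ad}}_{i(\omega) z}}(d_{DR}) = d_{DR} + z \Delta + z^2 \, i(E) i(\omega),
$$
as an identity of operators on the full de Rham complex. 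Restricting both sides to basic forms, the last term vanishes since $i(E) i(\omega) \alpha = i(\omega) i(E) \alpha = 0$ for every basic $\alpha$, so the gauge Hodge condition~(\ref{eqn:Conj}) holds on $\Omega^\bullet_B(M)$, and the conclusion follows from Theorems~\ref{thm:main} and~\ref{thm:HomotopyFrob}. The main obstacle is really the preliminary stability check for $i(\omega)$ on basic forms; once that is in place, the rest is a one-line variation of the Poisson argument, where the basicness condition is precisely what kills the extra $z^2$ correction produced by the Jacobi axiom $[\omega, \omega] = 2 E \wedge \omega$.
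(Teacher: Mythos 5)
Your proposal is correct and follows essentially the same route as the paper: the paper's proof likewise takes $R(z)=i(\omega)z$, observes via Formula~\eqref{adjointsquared} that $[i(\omega),[i(\omega),d_{DR}]]=2i(E)i(\omega)$ vanishes on basic forms, and then duplicates the Poisson argument through Theorems~\ref{thm:main} and~\ref{thm:HomotopyFrob}. Your explicit verification that $i(\omega)$ preserves $\Omega^\bullet_B(M)$ (using $i(E)i(\omega)=i(\omega)i(E)$ and $i(E)\Delta+\Delta i(E)=0$) is a point the paper leaves implicit, and it is carried out correctly.
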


\begin{proof}
In view of the previous proposition, the proof is almost identical to that of Theorem~\ref{PoissonFrob}. Indeed, by Formula~\eqref{adjointsquared}, we
have
 $$
[i(\omega),[i(\omega),d_{DR}]]=2i(E)i(\omega),
 $$
so $[i(\omega),[i(\omega),d_{DR}]]=0$ on basic forms. This allows us to duplicate the proof of Theorem~\ref{PoissonFrob}. 
\end{proof}

\begin{corollary}[\cite{ChineaMarreroLeon}]\label{JacobiCollapse}
For every Jacobi manifold~$M$, the spectral sequence for the double complex $(\Omega_{B}^\bullet(M), \allowbreak d_{DR},\Delta)$ degenerates on the first page.
\end{corollary}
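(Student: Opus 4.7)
The plan is to deduce this corollary in exactly the same way as its Poisson counterpart was deduced from Theorem~\ref{PoissonFrob}, namely by invoking Proposition~\ref{prop:Degeneration} once a Hodge-to-de Rham degeneration datum has been exhibited on the mixed complex $(\Omega_B^\bullet(M), d_{DR}, \Delta)$.

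More concretely, I would first recall that in the proof of Theorem~\ref{JacobiFrobBasic} one verifies the gauge Hodge condition on the basic de Rham complex: Formula~\eqref{adjointsquared} together with $[i(\omega), d_{DR}] = \Delta$ shows that on basic forms the two operators $i(\omega)$ and $\Delta$ commute, so that the series $R(z) := i(\omega)z$ in $\End(\Omega_B^\bullet(M))[[z]]$ satisfies the conjugation relation
\begin{equation*}
e^{i(\omega)z}\, d_{DR}\, e^{-i(\omega)z} = d_{DR} + \Delta\, z
\end{equation*}
on $\Omega_B^\bullet(M)$. Thus the gauge Hodge condition of Theorem~\ref{thm:main} is satisfied for the mixed complex $(\Omega_B^\bullet(M), d_{DR}, \Delta)$.

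The second step is then purely formal: by Theorem~\ref{thm:main}, the gauge Hodge condition implies the existence of a Hodge-to-de Rham degeneration data for $(\Omega_B^\bullet(M), d_{DR}, \Delta)$, and by Proposition~\ref{prop:Degeneration} this is equivalent to the degeneration of the associated spectral sequence at the first page. Since this mixed complex is precisely the canonical double complex considered in~\cite{ChineaMarreroLeon}, the corollary follows.

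There is essentially no obstacle here, because all the genuine work (the verification of the gauge Hodge condition in the Jacobi setting, which relies on Lemma~\ref{DeltaSquaredOnJacobi} and Formula~\eqref{adjointsquared}) has already been done in Theorem~\ref{JacobiFrobBasic}. The corollary is an immediate specialisation of the general equivalence recorded in Proposition~\ref{prop:Degeneration}, and the proof reduces to a one-line citation of that proposition.
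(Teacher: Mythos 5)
Your proposal is correct and is exactly the argument the paper intends: the gauge Hodge condition established in the proof of Theorem~\ref{JacobiFrobBasic} yields a Hodge-to-de Rham degeneration data via Theorem~\ref{thm:main}, and Proposition~\ref{prop:Degeneration} then gives the degeneration at the first page, precisely mirroring how the Poisson-case corollary is deduced from Theorem~\ref{PoissonFrob}. The paper leaves this corollary without an explicit proof, but your reconstruction matches the intended one-line citation.
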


\begin{remarks}$ \ $
\begin{itemize}
 \item[$\diamond$] For the so-called regular Jacobi manifolds \cite{ChineaMarreroLeon}, Theorem \ref{JacobiFrobBasic} is literally contained in Theorem \ref{PoissonFrob}. Basically, a Jacobi manifold is \emph{regular} if the space of leaves $\widetilde{M}=M/E$ can be defined as a smooth manifold; in this case, it automatically inherits a Poisson structure from the Jacobi structure on $M$, and $\Omega_B^\bullet(M)\simeq\Omega^\bullet(\widetilde{M})$.
 \item[$\diamond$] In fact, the homotopy Frobenius structure on the de Rham cohomology of a Poisson manifold, as in Theorem \ref{PoissonFrob}, and on the basic de Rham cohomology of a Jacobi manifold, as in Theorem \ref{JacobiFrobBasic}, could also be described in a different way, along the lines of~\cite{KhoroshkinMarkarianShadrin11}.

\noindent
Indeed, in both cases we have a structure of a $\BV/\Delta$-algebra on the de Rham algebra of differential forms (basic differential forms in the case of Jacobi manifolds). In \cite{KhoroshkinMarkarianShadrin11}, an explicit formula for a quasi-isomorphism between the operads $H_\bullet(\overline{\mathcal{M}}_{0,n+1})$ and $\BV/\Delta$ is given. Therefore, the de Rham algebra has a Frobenius manifold structure, and this structure induces a homotopy Frobenius manifold structure on the de Rham cohomology. For details we refer to~\cite{KhoroshkinMarkarianShadrin11}.

\noindent
It is an interesting question whether it is possible to match the two approaches on the level of formulas. Since one of the ways to obtain the aforementioned quasi-isomorphism uses the Givental theory, one natural idea would be to describe the $\BV_\infty$-structure in terms of cohomological field theory and infinitesimal Givental operators. The first step in that direction is made in~\cite{DotsenkoShadrinVallette11}, where this kind of description is given for commutative $\BV_\infty$-algebras.
\end{itemize}
\end{remarks}

In fact, the full de Rham cohomology of a Jacobi manifold carries a homotopy Frobenius manifold structure as well. However, the argument used should be adapted appropriately, since according to Equation~\eqref{deltasquared}, the operator $\Delta^2$ is not equal, but only homotopic to zero. The appropriate notion we shall use here is that of a commutative homotopy $\BV$-algebra.
 
\begin{definition}[Commutative $\BV_\infty$-algebra~\cite{Kravchenko00}]\label{comm-homotopy-bv}
A \emph{commutative $\BV_\infty$-algebra} 
 $$
(A, \allowbreak \wedge, \allowbreak d=\Delta_0, \Delta_1, \Delta_2, \ldots )
 $$ 
is a dg commutative algebra~$A$ equipped with operators $\Delta_n$ of degree $2n-1$ and order at most $n+1$, satisfying 
$$\sum_{i=0}^n \Delta_i \Delta_{n-i}=0, \quad \text{for}\ \ n\ge 0\ . $$
\end{definition}

In particular, the operators $\Delta_n$ of a commutative $\BV_\infty$-algebra $A$ make it a multicomplex.
The following statement is a direct application of a more general homotopy transfer theorem~\cite[Th.~$6.2$]{DrummondColeVallette11}.

\begin{proposition}[{\cite[Prop.~10]{DotsenkoShadrinVallette11}}]\label{CommHomotopyBVFrob}
Let $(A,\wedge,d=\Delta_0, \Delta_1, \Delta_2, \ldots )$ be a commutative $BV_\infty$-algebra admitting a Hodge-to-de Rham degeneration data. The underlying homotopy groups $H(A,d)$ carry a homotopy Frobenius manifold structure extending the induced commutative product.
\end{proposition}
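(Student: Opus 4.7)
The plan is to read the proposition as a direct corollary of the homotopy transfer theorem for commutative $\BV_\infty$-algebras proved in Drummond-Cole--Vallette, the only genuinely new input being that a Hodge-to-de Rham degeneration datum is exactly what is required in order to kill the transferred higher operators $\Delta'_n$.

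First, I would fix the deformation retract underlying the given Hodge-to-de Rham degeneration data and apply the general commutative $\BV_\infty$ homotopy transfer theorem (Theorem~$6.2$ of Drummond-Cole--Vallette) to transport the entire commutative $\BV_\infty$-structure from $A$ to $H(A,d)$. By construction, the transferred binary product extends the product induced by $\wedge$, so the last clause of the proposition is automatic. Next, I would observe that the underlying multicomplex part of the transferred structure is precisely the one produced by the formula of Proposition~\ref{prop:HTT}, which is by definition trivialised by the Hodge-to-de Rham degeneration datum. Hence the transferred commutative $\BV_\infty$-algebra on $H(A,d)$ has vanishing operators $\Delta'_n$ for every $n\ge 1$.

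The main obstacle, which I would not reprove but rather invoke from the cited reference, is the identification between a commutative $\BV_\infty$-algebra with vanishing $\Delta$-operations and a homotopy Frobenius manifold structure. Via the Koszul-type duality between the hypercommutative operad $H_\bullet(\overline{\calM}_{0,n+1})$ and the operations built from the $\BV$-operator, which underlies the works of Barannikov--Kontsevich, Getzler, and Drummond-Cole--Vallette, such a $\Delta$-trivial structure is precisely an algebra over $\Omega H^\bullet(\calM_{0,n+1})$. Granting this black box, the proof of the proposition reduces to the short sequence: choose a retract coming from the Hodge-to-de Rham degeneration data, transfer the commutative $\BV_\infty$-structure through it, invoke the degeneration assumption to conclude that all $\Delta'_n$ vanish, and read off the resulting homotopy Frobenius manifold structure on $H(A,d)$.
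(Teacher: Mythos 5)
Your proposal matches the paper's treatment: the paper gives no independent proof and simply cites this as a direct application of the homotopy transfer theorem of Drummond-Cole--Vallette (Th.~6.2) together with the degeneration data, which is exactly the reduction you describe. One caveat on your phrasing of the black box: a commutative $\BV_\infty$-algebra whose $\Delta$-operations literally vanish is just a dg commutative algebra, so the homotopy Frobenius manifold operations are not ``read off'' from the $\Delta$-trivial structure itself but are built from the transfer data (the homotopies effecting the trivialisation) via the quasi-isomorphism between $H_\bullet(\overline{\mathcal{M}}_{0,n+1})$ and the homotopy quotient of $\BV$ by $\Delta$; this is precisely the content of the cited theorem, so invoking it is legitimate, but the identification is not the naive one you state.
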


We shall use this result to deduce the following theorem.

\begin{theorem}\label{JacobiFrob}
The de Rham cohomology of a Jacobi manifold carries a natural homotopy Frobenius manifold structure extending the product induced by the wedge product.
\end{theorem}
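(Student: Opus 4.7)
The plan is to upgrade $\Omega^\bullet(M)$ to a commutative $\BV_\infty$-algebra in the sense of Definition~\ref{comm-homotopy-bv}, in such a way that the gauge Hodge condition holds, and then to invoke Proposition~\ref{CommHomotopyBVFrob}. The key insight, suggested by equation~\eqref{deltasquared}, is that although $\Delta = [i(\omega), d_{DR}]$ no longer squares to zero on arbitrary differential forms, its square is chain-homotopic to zero via the explicit operator $i(E) i(\omega)$, which should play the role of $\Delta_2$.

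First I would set $\Delta_0 := d_{DR}$, $\Delta_1 := [i(\omega), d_{DR}]$, $\Delta_2 := i(E) i(\omega)$, and $\Delta_n := 0$ for $n \geq 3$. The degree condition $|\Delta_n| = 2n - 1$ is immediate, and the order-at-most-$(n+1)$ condition follows from the fact that $i(\omega)$ is an order-two operator on $(\Omega^\bullet(M), \wedge)$ (being a contraction by a bivector) while $i(E)$ is a derivation, so $\Delta_2$ has order at most $3$.

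Next, I would verify the gauge Hodge condition of Theorem~\ref{thm:main} with the very explicit choice $R(z) := i(\omega) z$. Using the Koszul identity $i([\omega_1,\omega_2]) = -[[i(\omega_2), d_{DR}], i(\omega_1)]$ together with $[\omega,\omega] = 2 E \wedge \omega$, one computes $\mathrm{ad}_{i(\omega)}(d_{DR}) = \Delta_1$ and $\mathrm{ad}_{i(\omega)}^2(d_{DR}) = i([\omega,\omega]) = 2 \Delta_2$. The crucial point is that the exponential series terminates here: contractions by multivector fields are purely algebraic operators on forms and thus graded-commute among themselves, so $\mathrm{ad}_{i(\omega)}^3(d_{DR}) = 2 [i(\omega), i(E) i(\omega)] = 0$. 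This yields $e^{R(z)} d_{DR} e^{-R(z)} = d_{DR} + \Delta_1 z + \Delta_2 z^2$, which is precisely condition~\eqref{eqn:Conj}. The multicomplex relations $\sum_{i+j=n} \Delta_i \Delta_j = 0$ are then automatic by squaring the conjugation identity: $(d_{DR} + \Delta_1 z + \Delta_2 z^2)^2 = e^{R(z)} d_{DR}^2 e^{-R(z)} = 0$.

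Combined with the order conditions, this promotes $\Omega^\bullet(M)$ to a commutative $\BV_\infty$-algebra, and Theorem~\ref{thm:main} then provides a Hodge-to-de Rham degeneration data. Proposition~\ref{CommHomotopyBVFrob} finally produces a homotopy Frobenius manifold structure on $H^*_{DR}(M)$ extending the wedge product. The main obstacle is settling on the correct choice of the higher operator $\Delta_2$ and of the gauge $R(z)$; once these are identified, the computation proceeds mechanically, but the decisive simplification — namely, that the exponential truncates after two terms — is specific to the Jacobi setting and relies on the quadratic identity $[\omega,\omega] = 2 E \wedge \omega$, on $i(E)^2 = 0$, and on the graded-commutativity of contraction operators.
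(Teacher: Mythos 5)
Your proposal is correct and follows essentially the same route as the paper: the same operators $\Delta_0=d_{DR}$, $\Delta_1=[i(\omega),d_{DR}]$, $\Delta_2=i(E)i(\omega)$, the same gauge $R(z)=i(\omega)z$ with the exponential truncating after $\mathrm{ad}_{i(\omega)}^2$, and the same final appeal to Theorem~\ref{thm:main} and Proposition~\ref{CommHomotopyBVFrob}. The only (valid) streamlining is that you deduce the multicomplex relations $\sum_{i+j=n}\Delta_i\Delta_j=0$ at once by squaring the conjugation identity, whereas the paper verifies them term by term using Formulas~\eqref{anticommute}, \eqref{adjointsquared} and~\eqref{deltasquared}.
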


\begin{proof}
Let us denote $\Delta_0=d_{DR}$, $\Delta_1=\Delta$, and $\Delta_2=i(E)i(\omega)$. Clearly, $\Delta_0^2=0$, and by Formula~\eqref{anticommute}, we have $\Delta_0\Delta_1+\Delta_1\Delta_0=0$. Furthermore, by Formula \eqref{deltasquared}, we have 
$\Delta_1^2+\Delta_2\Delta_0+\Delta_0\Delta_2=0$. Also,
\begin{multline*}
\Delta_1\Delta_2+\Delta_2\Delta_1=\Delta i(E)i(\omega)+i(E)i(\omega)\Delta=\\=
-i(E)\Delta i(\omega)+i(E)i(\omega)\Delta=i(E)[i(\omega),\Delta]=2i(E)^2i(\omega)=0
\end{multline*}
and
 $$
\Delta_2^2=i(\omega)i(E)i(\omega)i(E)=i(\omega)^2i(E)^2=0.
 $$
Therefore, the operators $\Delta_0$, $\Delta_1$, $\Delta_2$ and $\Delta_n=0$ for $n>2$ endow $\Omega^\bullet(M)$ with a structure of a multicomplex. It is clear that $\Delta_0=d_{DR}$ is a differential operator of order at most~$1$, and that  $\Delta_1$ and $\Delta_2$ are differential operators of order at most~$2$ and at most~$3$ respectively. So the de Rham complex of a Jacobi manifold is a commutative homotopy $\BV$-algebra.
By Formula~\eqref{adjointsquared}, 
 $$
[i(\omega),[i(\omega),d_{DR}]]=[i(\omega),\Delta_1]=2\Delta_2  
 $$
and $[i(\omega),[i(\omega),[i(\omega),d_{DR}]]]=[i(\omega),2i(E)i(\omega)]=0$. Therefore,
 $$
e^{i(\omega)z} \, d_{DR}\,  e^{-i(\omega)z}=e^{\mathop{\mathrm{ad}}_{i(\omega)z}}(d_{DR})= \Delta_0 +  \Delta_1  z+\Delta_2 z^2\ .  
 $$
By Theorem~\ref{thm:main}, we conclude that $\Omega^\bullet(M)$ admits a Hodge-to-de Rham degeneration data, so Proposition~\ref{CommHomotopyBVFrob} applies, which completes the proof.
\end{proof}

\bibliographystyle{amsalpha}
\bibliography{bib}

\end{document}